\chardef\bslash=`\\ 
\numberwithin{equation}{section}
\newtheorem{theorem}{Theorem}[section]
\newtheorem{lemma}[theorem]{Lemma}
\newtheorem{proposition}[theorem]{Proposition}
\theoremstyle{remark}
\newtheorem{remark}[theorem]{Remark}
\theoremstyle{definition}
\newcommand\bp{\begin{proof}}
\newcommand\ep{\end{proof}}
\newcommand{\thmref}[1]{Theorem~\ref{#1}}
\newcommand{\secref}[1]{Section~\ref{#1}}
\newcommand{\proref}[1]{Proposition~\ref{#1}}
\newcommand{\lemref}[1]{Lemma~\ref{#1}}
\newcommand\chf{{\mathds 1}}
\newcommand\aaa{\mathfrak a}
\newcommand\pp{\mathfrak p}
\newcommand{\Q}{\mathbb Q}
\newcommand{\R}{\mathbb R}
\newcommand{\C}{\mathbb C}
\newcommand\ak{{\mathbb A}_K}
\newcommand\akf{{\mathbb A}_{K,f}}
\newcommand\al{{\mathbb A}_L}
\newcommand\alf{{\mathbb A}_{L,f}}
\newcommand\OO{{\mathcal O}}
\newcommand\ohs{{\hat{\OO}^*}}
\newcommand\gal{\mathcal G}
\newcommand\kab{K^{ab}}
\newcommand\lab{L^{ab}}
\newcommand\cl{\operatorname{Cl}}
\newcommand\Ind{\operatorname{Ind}}
\newcommand\kcl{\overline{K^*_+}}
\newcommand\ocl{\overline{\OO^*_+}}
\newcommand{\hh}{\mathcal H}
\newcommand\bpmatrix{\begin{pmatrix}}
\newcommand\epmatrix{\end{pmatrix}}
\newcommand{\matr}[2]{\left(\begin{matrix}1&#1 \\
0&#2\end{matrix}\right)}
\newcommand{\diag}[2]{\left(\begin{matrix}#1&0\\
0&#2\end{matrix}\right)}
\newcommand{\Ad}{\operatorname{Ad}}
\newcommand{\supp}{\operatorname{supp}}
\newcommand{\hecke}[2]{\mathcal \hh({#1}, {#2})}
\newcommand{\redheck}[2]{C^*_r({#1}, {#2})}
\newcommand\enu[1]{\smallskip\newline\makebox[6mm][l]{\rm(#1)}}
\newcommand\enuu[1]{\makebox[5mm][l]{\rm(#1)}}
\begin{document}

\title{Bost-Connes systems, Hecke algebras, and induction}
\author[Laca]{Marcelo Laca}
\author[Neshveyev]{Sergey Neshveyev}
\author[Trifkovi\'c]{Mak Trifkovi\'c}

\address{Marcelo Laca, Department of Mathematics and Statistics\\
University of Victoria\\
PO Box 3060\\
Victoria, BC V8W 3R4\\
Canada}
\email{laca@uvic.ca}

\address{Sergey Neshveyev,  Department of Mathematics\\
University of Oslo\\
PO Box 1053 Blindern\\
N-0316 Oslo\\
Norway}
\email{sergeyn@math.uio.no}

\address{Mak Trifkovi\'c, Department of Mathematics and Statistics\\
University of Victoria\\
PO Box 3060\\
Victoria, BC V8W 3R4\\
Canada}
\email{mtrifkov@uvic.ca}

\begin{abstract}
We consider a Hecke algebra naturally associated with the affine group with totally positive multiplicative part
over an algebraic number field $K$
and we show that the C$^*$-algebra of the Bost-Connes system for $K$ can be obtained from our Hecke algebra
by induction, from the group of totally positive principal ideals to the whole group of ideals. Our Hecke algebra is therefore a full corner, corresponding to the narrow Hilbert class field,
in the Bost-Connes C$^*$-algebra of $K$; in particular, the two algebras coincide if and only if $K$ has narrow class number one. Passing the known results for the Bost-Connes system for $K$ to this corner, we obtain a phase transition theorem for our Hecke algebra.

In another application of induction we consider an extension $L/K$ of number fields and we show that the Bost-Connes system for $L$ embeds into the system obtained from the Bost-Connes system for~$K$ by induction from the group of ideals in $K$ to the group of ideals in~$L$. This gives a C$^*$-algebraic correspondence from the Bost-Connes system for $K$ to that for~$L$. Therefore the construction of Bost-Connes systems can be extended to a functor from number fields to C$^*$-dynamical systems with equivariant correspondences as morphisms. We use this correspondence to induce KMS-states and we show that for $\beta>1$ certain extremal KMS$_\beta$-states for $L$ can be obtained, via  induction and rescaling, from KMS$_{[L: K]\beta}$-states for $K$. On the other hand, for $0<\beta\le1$ every KMS$_{[L: K]\beta}$-state for $K$ induces to an infinite weight.
\end{abstract}

\date{October 21, 2010; minor changes April 28, 2013}

\maketitle

\section*{Introduction}

The original  system of Bost and Connes \cite{bos-con} is based on the C$^*$-algebra of  the Hecke pair of orientation-preserving affine groups over the rationals and over the integers. The Bost-Connes Hecke algebra was subsequently shown to be a semigroup crossed product \cite{LRbcalg}, and this realization simplified the analysis of the phase transition and the classification of KMS-states \cite{Ldir,nes}. For general number fields several Hecke algebra constructions have been considered, see e.g.~\cite{HL, ALR,LFr}. In particular, the systems introduced in \cite{LFr} and studied further in \cite{LFr2} exhibit the right phase transition with spontaneous symmetry breaking, but only when the number field has class number one and has no real embeddings. Eventually, however, it was not a Hecke algebra but a restricted groupoid construction modeled on semigroup crossed products that yielded
the generalization of Bost-Connes systems for general number fields
which is now widely regarded as the correct one \cite{cmr1,HP,LLNlat}.
A key step in this construction is the induction from an action of the group
of integral ideles to an action of the Galois group of the maximal abelian extension. In this paper we demonstrate two uses of induction in the study of Bost-Connes type systems for algebraic number fields.

Our first application of induction  appears in \secref{shecke}, where we provide a definitive account of the relation between Bost-Connes systems and ``Hecke systems'' for arbitrary number fields. Specifically, we consider affine groups, over the field and over the algebraic integers, but we restrict the multiplicative subgroup to consist of totally positive elements, that is, to elements that are positive in every real embedding. The resulting inclusion of affine groups is then a Hecke pair and in  \proref{pcross} we show that the corresponding Hecke C$^*$-algebra is a semigroup crossed product which is
a full corner in a group crossed product by the group of totally positive principal ideals. Our main result in this section is \thmref{tiso}, where we show that the Bost-Connes algebra $A_K$ for~$K$ is a corner in the algebra obtained by induction from this crossed product to a crossed product by the full group of fractional ideals over $K$. This realizes our Hecke algebra
as a corner in the Bost-Connes algebra for $K$ and allows us easily to
derive a phase transition with symmetry breaking for our Hecke C$^*$-algebra by importing the known result for Bost-Connes systems from \cite{LLNlat}.

Since our construction restricts multiplication to totally positive elements, the corner is naturally associated to the narrow Hilbert class field $H_+(K)$ of $K$, namely, the maximal abelian extension of~$K$ unramified at every finite prime.   As it turns out, there is a similar crossed product construction for every intermediate field $K \subset L \subset H_+(K)$ between $K$ and its narrow Hilbert class field $H_+(K)$, for which a generalization of our main result holds, see \thmref{tinter}. In particular, when $L=H(K)$ is the Hilbert class field, we get an algebra containing the Hecke algebra of \cite{LFr} as its fixed point subalgebra with respect to the action of a finite subgroup of the Galois group. The rest of \secref{srelation} is devoted to describing relations between phase transitions of the various systems associated to number fields.

Our second application of induction is in \secref{sfunctor}, where we elucidate the functoriality of the construction of a Bost-Connes type system from an algebraic number field. Our main result here is \thmref{tfunctor}, where we show that the construction of Bost-Connes type systems extends to a functor which to an inclusion of number fields $K\hookrightarrow L$ assigns a C$^*$-correspondence which is equivariant with respect to their suitably rescaled natural dynamics. Finally, in \proref{pinduction} we show that  KMS-states of $A_K$ at high inverse temperature pass through the correspondence morphism and, after renormalization and adjusting of the inverse temperature,
they give KMS-states of $A_L$, while other KMS-states, for low inverse temperature, induce to infinite weights and hence do not yield KMS-states of $A_L$.

\bigskip

\section{Algebraic preliminaries} \label{sprelim}

Let $K$ be an algebraic number field with ring of integers
$\OO$.   
For any place $v$ of $K$,
denote by~$K_v$ the completion of~$K$ at $v$.   We indicate that $v$ is finite
(i.e., defined by the valuation at a prime ideal of $\OO$) by writing
$v\nmid\infty$; in that case, let $\OO_v$ be the
closure of $\OO$ in $K_v$.   We similarly put $v|\infty$ when~$v$ is infinite (i.e., defined by an
embedding of $K$ into $\R$ or $\C$), and denote by
$K_\infty=\prod_{v|\infty}K_v$ the completion of~$K$ at all infinite
places. The adele ring $\ak$ is the restricted product, as $v$ ranges
over all places, of the rings
$K_v$, with respect to $\OO_v\subset K_v$ for $v\nmid\infty$. When the product
is taken only over finite places $v$, we get the ring $\akf$ of finite
adeles; we then have $\ak=K_\infty\times\akf$.   The ring of integral
adeles is $\hat\OO=\prod_{v\nmid\infty}\OO_v\subset \akf$.  Let $N_K\colon\akf^*\to(0,+\infty)$ be the absolute norm.

\smallskip

We will need basic facts of class
field theory.  A good general reference is \cite{cf}.
\begin{enumerate}
\item  There exists a continuous surjective homomorphism  $r_K\colon\ak^*\to\gal(K^{ab}/K)$ with kernel $\overline{K^o_\infty K^*}$,
where $K^o_\infty=\prod_{v\text{ real}}\R^*_+\times\prod_{v\text{
 complex}}\C^*$ is the connected component of $K^*_\infty$.
\item \label{efunct} If $\sigma\colon K\hookrightarrow L$ is an embedding of number fields then we have a commutative diagram
$$
\xymatrix{\ak^*\ar[r]^{r_K\ \ \ \ \ }\ar[d]_\sigma & \gal(\kab/K)\ar[d]^{V_{L/\sigma(K)}\circ\Ad\bar\sigma}\\
\al^*\ar[r]_{r_L\ \ \ \ \ } & \gal(\lab/L).}
$$
Here $\bar\sigma\in\gal(\bar\Q/\Q)$ is any extension of $\sigma$, so
that $\Ad\bar\sigma$ defines an isomorphism
$\gal(\kab/K)\to\gal(\sigma(K)^{ab}/\sigma(K))$, and
$V_{L/\sigma(K)}\colon \gal(\sigma(K)^{ab}/\sigma(K))\to\gal(\lab/L)$
is the transfer, or Verlagerung, map.  The definition of this map is
rather involved, but all we will need to know is that it exists
and fits into the above diagram.
\item\label{cfinert}  Let $v$ be a finite place of $K$, and $\bar v$ any extension of
  $v$ to $K^{ab}$.  The inertia group $I_{\bar v/v}$ does not
  depend on the choice of the extension $\bar v$, and satisfies
  $I_{\bar v/v}=r_K(\OO_v^*)$.  Therefore an abelian extension $L/K$ is
  unramified at $v$ if and only if $\OO_v^*$ is in the kernel of the composed map
  $\ak^*\xrightarrow{r_K}\gal(K^{ab}/K)\xrightarrow{restriction}\gal(L/K)$.
\item  The narrow Hilbert class field $H_+(K)$ is the maximal abelian
  extension of $K$ which is unramified at all finite places $v$.  By
  \eqref{cfinert}, we have
  $\gal(K^{ab}/H_+(K))=r_K(\hat\OO^*)\subset\gal(K^{ab}/K)$.
\item  The subfield of $H(K)\subset H_+(K)$ fixed by
  $\gal(\kab/H(K))=r_K(K_\infty^*\hat\OO^*)$ is called the (wide)
  Hilbert class field.  It is characterized by being the maximal abelian
  everywhere unramified extension of $K$, so it is unramified at every finite place and stays real over each real place of~$K$.
\end{enumerate}

It is convenient to remove any reference to infinite places from the above standard statement of class field theory. In order to do this we consider
the multiplicative subgroup  $K_+^*\subset K^*$ of totally positive elements, that is, elements which are positive in every real embedding of $K$. Put also $\OO^\times_+=\OO\cap K^*_+$ and $\OO^*_+=\OO^*\cap K^*_+$. The following isomorphisms are well-known, but for the reader's convenience we still include a proof. The closures considered are in the finite ideles.

\begin{proposition} \label{pgalois}
The restrictions of the Artin map $r_K$ to $\akf^*\supset
K^*\hat\OO^*\supset\hat\OO^*$ give isomorphisms
$$
\akf^*/\overline{K^*_+}\cong\gal(K^{ab}/K),\ \ K^*\hat\OO^*/\kcl\cong\gal(\kab/H(K))\ \ \hbox{and}\ \ \hat\OO^*/\overline{\OO^*_+}\cong\gal(K^{ab}/H_+(K)).
$$
\end{proposition}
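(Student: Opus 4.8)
The plan is to deduce all three isomorphisms from a careful analysis of the restriction of $r_K$ to $\akf^*$, viewed as the subgroup $\{1\}\times\akf^*$ of $\ak^*=K_\infty^*\times\akf^*$. The crucial first step is to show that this restriction $r_K|_{\akf^*}$ is surjective onto $\gal(\kab/K)$ with kernel exactly $\kcl$. For surjectivity, note that since $\ker r_K=\overline{K^o_\infty K^*}$ contains $K^o_\infty$, the map $r_K$ on $K_\infty^*$ factors through the finite group $K_\infty^*/K^o_\infty\cong\{\pm1\}^{r_1}$ of sign patterns at the real places. For every such pattern weak approximation provides an element $x\in K^*$ realizing it; as $x\in\ker r_K$, its infinite part $x_\infty$ and finite part $x_f$ satisfy $r_K(x_\infty,1)=r_K(1,x_f)^{-1}$, so the contribution of $x_\infty$ is already attained on $\akf^*$, i.e. $r_K(K_\infty^*\times\{1\})\subseteq r_K(\akf^*)$. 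Since $\ak^*=K_\infty^*\times\akf^*$ and $r_K$ is onto $\gal(\kab/K)$, it follows that $r_K|_{\akf^*}$ is already onto.

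For the kernel, the inclusion $\kcl\subseteq\ker(r_K|_{\akf^*})$ is immediate: a totally positive $x\in K^*_+$ has $x_\infty\in K^o_\infty\subseteq\ker r_K$, whence $r_K(1,x_f)=r_K(x_\infty,x_f)=1$, and the kernel is closed. The reverse inclusion is where the main work lies, and it is the crux of the whole proof. Given $(1,y)\in\overline{K^o_\infty K^*}$, I would choose a net $(u_\alpha x^{(\alpha)}_\infty,\,x^{(\alpha)}_f)\to(1,y)$ with $u_\alpha\in K^o_\infty$ and $x^{(\alpha)}\in K^*$. Convergence of the infinite component to $1$ forces, at each of the finitely many real places $v$, the product $u_{\alpha,v}\,\sigma_v(x^{(\alpha)})$ to approach $1$; since $u_{\alpha,v}>0$, the sign $\sigma_v(x^{(\alpha)})>0$ eventually, so $x^{(\alpha)}$ is eventually totally positive. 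As $x^{(\alpha)}_f\to y$, this exhibits $y$ as a limit of finite parts of elements of $K^*_+$, that is, $y\in\kcl$. Turning convergence of the archimedean part into eventual total positivity at finitely many real places is the delicate point.

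With the first isomorphism in hand, the remaining two follow by restricting $r_K|_{\akf^*}$ to the subgroups $\hat\OO^*\subseteq K^*\hat\OO^*\subseteq\akf^*$ and matching images and kernels against items (4) and (5) of the class field theory summary. For the images one checks $r_K(K^*\hat\OO^*)=r_K(K_\infty^*\hat\OO^*)=\gal(\kab/H(K))$ and $r_K(\hat\OO^*)=\gal(\kab/H_+(K))$, the first equality again absorbing the archimedean signs into $K^*$ exactly as above. For the kernels it suffices to intersect $\kcl$ with each subgroup: since the class group is finite, $K^*\hat\OO^*$ is an open, hence closed, finite-index subgroup containing $K^*_+$, so $\kcl\cap K^*\hat\OO^*=\kcl$; and since any $x\in K^*_+$ whose finite part lies in $\hat\OO^*$ is a unit at every finite place and hence lies in $\OO^*_+$, one obtains $\kcl\cap\hat\OO^*=\ocl$. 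Finally, each of the three continuous bijections is a homeomorphism: the third because $\hat\OO^*$ is compact and maps continuously onto the Hausdorff group $\gal(\kab/H_+(K))$, and the other two by the open mapping theorem applied to the locally compact $\sigma$-compact group $\akf^*$ (or by noting that the quotients are themselves compact).
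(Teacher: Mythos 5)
Your proof is correct and follows essentially the same route as the paper's: restrict $r_K$ to the finite ideles, obtain surjectivity by realizing archimedean sign patterns with elements of $K^*$, identify the kernel with $\overline{K^*_+}$, and then intersect with the open subgroups $K^*\hat\OO^*$ and $\hat\OO^*$ using $r_K(K^*_\infty)=r_K|_{\akf^*}(K^*)$. Your explicit net argument for the kernel is just an unwound version of the paper's one-line identity $U\cap\overline{S}=\overline{U\cap S}$ for an open subgroup $U$, applied to $U=K^o_\infty\akf^*$ and $S=K^o_\infty K^*$.
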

Remark: it is stated in \cite[Proposition 4.1]{LFr} that
$\hat\OO^*/\overline{\OO^*}\cong\gal(K^{ab}/H_+(K))$, but the proof
given there works only when all units are totally positive.  The main
results of \cite{LFr} are not affected since they only concern totally
imaginary fields.

\begin{proof}[Proof of Proposition~\ref{pgalois}]
Since $\ak^*=K^o_\infty K^*\akf^*$, the map $\tilde r_K:=r_K|_{\akf^*}\colon\akf^*\to\gal(K^{ab}/K)$ is surjective. Since $K^o_\infty\akf^*$ is open in $\ak^*$, the kernel of the restriction of $r_K$ to $K^o_\infty\akf^*$ is
$$
K^o_\infty\akf^*\cap\overline{K^o_\infty K^*}=\overline{K^o_\infty\akf^*\cap K^o_\infty K^*}=\overline{K^o_\infty K^*_+}.
$$
Hence the kernel of $\tilde r_K$ is the image of $\overline{K^o_\infty K^*_+}$ in $K^o_\infty\akf^*/K^o_\infty=\akf^*$, which is $\overline{K^*_+}\subset \akf^*$. This proves the first isomorphism.

To prove the second isomorphism, observe that $r_K(K_\infty^*)=\tilde r_K(K^*)$. In order to see this denote by~$j$ the embedding of $K^*$ into $\akf^*$. Then $K^*_\infty K^*=K^o_\infty K^* j(K^*)$, whence $r_K(K^*_\infty)=r_K(j(K^*))=\tilde r_K(K^*)$. It follows that $\gal(\kab/H(K))=r_K(K^*_\infty\ohs)=\tilde r_K(K^*\ohs)$. Since $K^*\ohs$ is open in $\akf^*$ and contains~$K^*_+$, which is dense in the kernel of $\tilde r_K$, we get the second isomorphism.

The third isomorphism follows from $\gal(K^{ab}/H_+(K))=\tilde r_K(\hat\OO^*)$ and $\hat\OO^*\cap \overline{K^*_+}=\overline{\OO^*_+}$.
\ep

Let $J_K\cong\akf^*/\hat\OO^*$ be the group of fractional ideals of
$K$ and let $\mathcal P_{K,+}\cong K^*_+/\OO^*_+$ be the subgroup of
principal fractional ideals with a totally positive generator. By the above proposition the preimage of $\gal(K^{ab}/H_+(K))$ in $\akf^*$ is the group $K^*_+\ohs$. Hence
$$
\gal(H_+(K)/K)\cong\akf^*/K^*_+\ohs\cong J_K/\mathcal P_{K,+}.
$$
The last quotient is by definition $\cl_+(K)$, the narrow class group of $K$.

\smallskip

The fundamental construction
underlying this paper is induction. Let $\rho\colon H\to G$ be a homomorphism of groups and $X$ be a set with a left action of $H$.  The formula $h(g,x)=(g\rho(h)^{-1}, hx)$ defines a left action
of $H$ on $G\times X$.  The quotient
$$
G\times_H X:=H\backslash(G\times X)
$$
is called the balanced product associated to the pair $(\rho,X)$, or
the induction of $X$ via $\rho$. There is a natural left
action of $G$ on $G\times_H X$: $g(g',x)=(gg',x)$.  Restricting to
$H$, we get an action of $H$ on $G\times_H X$.  The composition
of the map $X\to G\times X$, $x\mapsto (e,x)$, with the quotient map $G\times X\to G\times_H X$ gives a map $i\colon X\to G\times_H X$. This map is $H$-equivariant in the sense that $i(hx)=\rho(h)i(x)$. It induces a bijection $H\backslash X\cong G\backslash(G\times_H X)$.

Assume now that $G$ and $H$ are discrete groups, $\rho$ is injective, and $X$ is a locally compact space with an action of $H$ by homeomorphisms. In this case $i(X)$ is a clopen subset of $G\times_H X$ and the map $i\colon X\to i(X)$ is a homeomorphism. If the action of $H$ on $X$ is proper, we get a homeomorphism $H\backslash X\cong G\backslash(G\times_H X)$ of locally compact spaces. For general actions there is a version of this homeomorphism for reduced crossed products, thought of as noncommutative quotients. Namely, consider the transformation groupoid $G\times(G\times_H X)$ defined by the action of $G$ on $G\times_H X$. Observe that $g i(X)\cap i(X)\ne\emptyset$ if and only if $g\in\rho(H)$. It follows that the reduction of $G\times(G\times_H X)$ by the open subset $i(X)\subset G\times_H X$ is a groupoid which is isomorphic to the transformation groupoid $H\times X$. Therefore we have the following result.

\begin{proposition}\label{pnoncomind}
Let $\rho\colon H\to G$ be an injective homomorphism of discrete groups,
and let $X$ be a locally compact space with an action of $H$. Then $i(X)$ is a clopen subset of $G\times_H X$, the corresponding projection in the multiplier algebra of $C_0(G\times_H X)\rtimes_r G$ is full, and
$$
C_0(X)\rtimes_r H\cong\chf_{i(X)}(C_0(G\times_H X)\rtimes_r G)\chf_{i(X)}.
$$
\end{proposition}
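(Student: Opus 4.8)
The plan is to obtain all three assertions from the groupoid isomorphism established in the paragraph preceding the statement, so that the only genuinely analytic input is a single standard fact about reduced groupoid C$^*$-algebras. Put $Y=G\times_H X$ and let $G\times Y$ be the transformation groupoid of the $G$-action, which is étale because $G$ is discrete; by definition $C^*_r(G\times Y)=C_0(Y)\rtimes_r G$. The discussion above the statement supplies exactly the three ingredients I need: that $i(X)$ is a clopen subset of $Y$ and $i\colon X\to i(X)$ is a homeomorphism; that the restriction $(G\times Y)|_{i(X)}$ of $G\times Y$ to $i(X)$ is isomorphic, as a topological groupoid, to $H\times X$, an arrow $(\rho(h),i(x))$ corresponding to the arrow $(h,x)$ via $\rho(h)i(x)=i(hx)$; and that $i$ induces a bijection $H\backslash X\cong G\backslash Y$.

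The first assertion is then immediate. For the isomorphism I would invoke the general principle that, for an étale groupoid $\Gamma$ and a clopen subset $U$ of its unit space $\Gamma^{(0)}$, the indicator $\chf_U\in C_b(\Gamma^{(0)})\subseteq M(C^*_r(\Gamma))$ is a projection and the corner it cuts out is canonically isomorphic to the reduced algebra of the restriction, $\chf_U\,C^*_r(\Gamma)\,\chf_U\cong C^*_r(\Gamma|_U)$. Applying this with $\Gamma=G\times Y$ and $U=i(X)$, and feeding in $(G\times Y)|_{i(X)}\cong H\times X$, gives
\[
\chf_{i(X)}\big(C_0(Y)\rtimes_r G\big)\chf_{i(X)}\cong C^*_r\big((G\times Y)|_{i(X)}\big)\cong C^*_r(H\times X)=C_0(X)\rtimes_r H,
\]
where in the last step one transports $C_0(i(X))$ to $C_0(X)$ along $i$ and notes that, since $\rho(h)\cdot i(X)=i(X)$, the compression $\chf_{i(X)}u_{\rho(h)}\chf_{i(X)}$ corresponds to the canonical implementing multiplier $u_h$ of $h\in H$, again by $\rho(h)i(x)=i(hx)$.

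Fullness I would handle by hand. The bijection $H\backslash X\cong G\backslash Y$ says precisely that $i(X)$ meets every $G$-orbit, i.e.\ $G\cdot i(X)=Y$, since every point of $Y$ is of the form $g\,i(x)$. Let $I$ denote the closed ideal of $C_0(Y)\rtimes_r G$ generated by $\chf_{i(X)}$; it contains $\chf_{i(X)}C_0(Y)\chf_{i(X)}=C_0(i(X))$, and hence, conjugating by the $u_g$ and using $u_g\chf_{i(X)}u_g^*=\chf_{g\cdot i(X)}$, it contains $C_0(g\cdot i(X))$ for every $g$. Summing over $g$ yields $C_0(G\cdot i(X))=C_0(Y)\subseteq I$; since $C_0(Y)$ in turn generates the whole crossed product as an ideal (for $c\in C_0(Y)$ and $g\in G$ one has $c\,u_g=\lim_\lambda(e_\lambda c)u_g$ with $e_\lambda c\in C_0(Y)\subseteq I$ for an approximate unit $(e_\lambda)$ of $C_0(Y)$), it follows that $I=C_0(Y)\rtimes_r G$, so $\chf_{i(X)}$ is full.

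The one genuinely analytic step, and the place I expect the real work to be, is the reduced-norm matching concealed in the ``general principle'': one must verify that compressing the reduced norm of $C^*_r(\Gamma)$ to the corner determined by $\chf_U$ reproduces the reduced norm of $C^*_r(\Gamma|_U)$, and not some intermediate completion. Concretely, in the regular representation of $\Gamma$ at a unit $u\in U$ on $\ell^2$ of the arrows with source $u$, the multiplier $\chf_U$ acts as the projection onto $\ell^2$ of those arrows whose range also lies in $U$, and the compression there is exactly the regular representation of $\Gamma|_U$ at $u$, while units $u\notin U$ contribute nothing to the corner. I would either cite this restriction-to-a-clopen-subset statement for reduced groupoid C$^*$-algebras or establish it directly by comparing these regular representations on $C_c$-functions and then passing to completions.
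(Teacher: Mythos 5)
Your proposal is correct and follows essentially the same route as the paper, which offers no separate proof beyond the preceding paragraph's observation that $g\,i(X)\cap i(X)\ne\emptyset$ iff $g\in\rho(H)$, so that the restriction of the transformation groupoid $G\times(G\times_H X)$ to the clopen set $i(X)$ is isomorphic to $H\times X$, whence the corner identification. You merely make explicit the two points the paper leaves implicit --- the reduced-norm matching for the clopen restriction of an \'etale groupoid and the fullness of $\chf_{i(X)}$ via $G\cdot i(X)=G\times_H X$ --- and both of those details are handled correctly.
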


The same is true for full crossed products. In our applications the group $G$ will be abelian, so that reduced and full crossed products coincide.

\bigskip

\section{From Hecke algebras to Bost-Connes systems}\label{shecke}

For a number field $K$ consider the following inclusion of $ax+b$ groups:
$$
P_\OO^+=\matr{\OO}{\OO^*_+}\subset P_K^+=\matr{K}{K^*_+}.
$$
Recall that a pair of groups $\Gamma\subset G$ is called a Hecke pair
if every double coset can be written as a finite disjoint union of left and right cosets:
$$
\Gamma g\Gamma=\bigsqcup_{i=1}^{L(g)}\Gamma
l_i=\bigsqcup_{j=1}^{R(g)}r_j\Gamma,\ \ g,l_i,r_j\in G.
$$
This happens if and only if the subgroups $\Gamma$ and $g\Gamma
g^{-1}$ are commensurable for every $g\in G$.  In that case, the modular function of the pair is defined by
$$
\Delta(g)=\frac{L(g)}{R(g)}=\frac{[\Gamma\colon \Gamma\cap g\Gamma g^{-1}]}{[g\Gamma g^{-1}\colon \Gamma\cap g\Gamma g^{-1}]}.
$$

\begin{lemma}\label{heckepairs}
The inclusion $P_\OO^+\subset P_K^+$ is a Hecke pair, and for $y\in K$, $x\in K^*_+$ we have
$$
\Delta\matr{y}{x}=N_K(x),
$$
where $N_K\colon\akf^*\to(0,+\infty)$ is the absolute norm.
\end{lemma}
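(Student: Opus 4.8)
The plan is to identify $P_K^+$ with the semidirect product $K\rtimes K^*_+$, writing $g=\matr{y}{x}$ for the pair $(y,x)$, and to exploit the projection homomorphism $\pi\colon P_K^+\to K^*_+$, $\matr{y}{x}\mapsto x$. Its kernel $N$ is the additive group $K$ (embedded as $\matr{\cdot}{1}$), and for $\Gamma=P_\OO^+$ we have $\pi(\Gamma)=\OO^*_+$ and $\Gamma\cap N=\OO$. A direct matrix computation gives
\[
g\Gamma g\inv=\Bigl\{\matr{(n+y(u-1))/x}{u}:n\in\OO,\ u\in\OO^*_+\Bigr\}.
\]
Since $K^*_+$ is abelian, $\pi\circ\Ad(g)=\pi$, so $\pi(g\Gamma g\inv)=\OO^*_+$ as well, while setting $u=1$ shows $g\Gamma g\inv\cap N=x\inv\OO$. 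Thus $\Gamma$ and $g\Gamma g\inv$ have the same image under $\pi$, and their intersections with $N$ are the fractional ideals $\OO$ and $x\inv\OO$.

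First I would establish commensurability, which yields the Hecke pair property. Put $H=\Gamma\cap g\Gamma g\inv$, so that $H\cap N=\OO\cap x\inv\OO$; as any two nonzero fractional ideals are commensurable lattices in $K$, the indices $[\OO:\OO\cap x\inv\OO]$ and $[x\inv\OO:\OO\cap x\inv\OO]$ are finite. The remaining point is that $\pi(H)$ has finite index in $\OO^*_+$. From the description of $g\Gamma g\inv$, a unit $u\in\OO^*_+$ lies in $\pi(H)$ exactly when $y(u-1)\in\OO+x\OO$; this set is a subgroup of $\OO^*_+$ containing every totally positive unit with $u\equiv1\pmod{\aaa}$, where $\aaa=\OO\cap y\inv(\OO+x\OO)$ is a nonzero integral ideal (and $\pi(H)=\OO^*_+$ when $y=0$). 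I expect this to be the main obstacle, as it is the one genuinely arithmetic step: the principal congruence subgroup $\{u\in\OO^*:u\equiv1\pmod{\aaa}\}$ is the kernel of $\OO^*\to(\OO/\aaa)^*$ and hence of finite index, and since $[\OO^*:\OO^*_+]<\infty$ we conclude $[\OO^*_+:\pi(H)]<\infty$.

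Next I would compute the two indices separately from the extension $1\to N\to P_K^+\xrightarrow{\pi}K^*_+\to1$. Because $N$ is normal, $\Gamma\cap N$ is normal in $\Gamma$ with quotient $\OO^*_+$, and the standard tower argument, via $[\Gamma:H]=[\Gamma:H(\Gamma\cap N)]\,[H(\Gamma\cap N):H]$, gives
\[
[\Gamma:H]=[\OO^*_+:\pi(H)]\,[\OO:\OO\cap x\inv\OO].
\]
The identical computation inside $g\Gamma g\inv$, whose intersection with $N$ is $x\inv\OO$, yields
\[
[g\Gamma g\inv:H]=[\OO^*_+:\pi(H)]\,[x\inv\OO:\OO\cap x\inv\OO].
\]
The common factor $[\OO^*_+:\pi(H)]$ cancels in the ratio defining $\Delta(g)$, so the modular function sees only the additive, i.e.\ ideal-theoretic, part.

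Finally I would identify what remains, namely $[\OO:\OO\cap x\inv\OO]/[x\inv\OO:\OO\cap x\inv\OO]$, with $N_K(x)\inv$. This ratio is precisely the generalized (module) index of the fractional ideal $x\inv\OO$ relative to $\OO$, which is multiplicative in the ideal and equals the absolute norm $N(x\inv\OO)=N(x\OO)\inv$; for $x\in K^*_+$ this coincides with $N_K(x)\inv=N_{K/\Q}(x)\inv$. The case $x\in\OO$ is transparent, since then $\OO\cap x\inv\OO=\OO$ and $[x\inv\OO:\OO]=[\OO:x\OO]=N_K(x)$, giving the ratio $N_K(x)\inv$, and the general case follows by multiplicativity or by clearing a common denominator. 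This gives $\Delta\matr{y}{x}=N_K(x)\inv$, as asserted.
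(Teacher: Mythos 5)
Your proof is correct, but it takes the ``direct computation'' route that the paper mentions only in one sentence (``as in \cite{LFr}'') and then deliberately sidesteps. The paper's actual argument embeds $P^+_\OO\subset P^+_K$ densely into the topological pair $\bar P^+_\OO=\matr{\hat\OO}{\overline{\OO^*_+}}\subset\bar P^+_K=\matr{\akf}{\overline{K^*_+}}$, invokes the theory of topological Hecke pairs (Tzanev) to get the Hecke property from the fact that $\bar P^+_\OO$ is a compact open subgroup of the locally compact group $\bar P^+_K$, and reads off $\Delta$ as the restriction of the modular function of $\bar P^+_K$, computed from the left Haar measure $d\mu(x)\,d\nu(y)$ and the scaling $\nu(\cdot\,x)=N_K(x)\inv\nu(\cdot)$. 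Your argument does by hand exactly what the adelic picture packages automatically: your finiteness of $[\OO^*_+:\pi(H)]$ via the congruence kernel of $\OO^*\to(\OO/\aaa)^*$ with $\aaa=\OO\cap y\inv(\OO+x\OO)$ corresponds to the compactness of $\overline{\OO^*_+}$ in $\hat\OO^*$, and your identification of $[\OO:\OO\cap x\inv\OO]/[x\inv\OO:\OO\cap x\inv\OO]$ with $N_K(x)\inv$ is the modulus of multiplication by $x$ on $\akf$. The step you correctly single out as the genuinely arithmetic one --- finite index of $\pi(H)$ in $\OO^*_+$ --- is handled properly (the set $\{u:y(u-1)\in\OO+x\OO\}$ is indeed a subgroup containing the units congruent to $1$ modulo $\aaa$, and $[\OO^*:\OO^*_+]<\infty$), and the cancellation of the common factor $[\OO^*_+:\pi(H)]$ in the ratio $L(g)/R(g)$ is precisely why $\Delta$ depends only on $x$. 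The trade-off is that your version is elementary and self-contained, needing only finiteness of $(\OO/\aaa)^*$ and multiplicativity of the generalized lattice index, whereas the paper's is shorter and reuses the adelic completions that are needed anyway in Proposition~2.3.
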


\bp This can be checked by direct computation of double cosets, as in
\cite{LFr}. Alternatively we can embed the pair  $P_\OO^+\subset
P_K^+$ densely into the pair
$$
\bar P_\OO^+=\matr{\hat\OO}{\overline{\OO^*_+}}\subset\bar P_K^+=\matr{\akf}{\overline{K^*_+}}
$$
of subgroups of $\matr{\akf}{\akf^*}$,
and use the theory of topological Hecke pairs as in \cite{tza}.

The group $\bar P^+_K$ is locally compact, and $\bar P^+_\OO$ is a
compact open subgroup, which shows that $(\bar P^+_\OO,\bar P^+_K)$ is
a Hecke pair.  Since $P^+_K$ is dense in $\bar P^+_K$ and
$P^+_\OO=\bar P^+_\OO\cap P^+_K$, it follows that
$(P^+_\OO,P^+_K)$ is also a Hecke
pair. Furthermore, the modular function of $(P^+_\OO,P^+_K)$ is the restriction of the modular function of the locally compact group $\bar P^+_K$ to $P^+_K$.

If $\mu$ and $\nu$ are Haar measures on~$\overline{K^*_+}$ and $\akf$, respectively, then
$$
d\lambda\matr{y}{x}=d\mu(x)d\nu(y)
$$
is a left-invariant Haar measure on $\bar P^+_K$. Since $\nu$ has the
property $\nu(\cdot\,x)=N_K(x)\nu (\cdot)$ for $x\in\akf^*$, we get the
required formula for the modular function of  $(P^+_\OO,P^+_K)$.
\ep

Recall that if $\Gamma\subset G$ is a Hecke pair, then the space $\hecke{G}{\Gamma}$ of finitely supported functions on
$\Gamma\backslash G/\Gamma$ is a $*$-algebra with  product
$$
(f_1*f_2)(g)=\sum_{h\in\Gamma\backslash G}f_1(gh^{-1})f_2(h)
$$
and involution $f^*(g)=\overline{f(g^{-1})}$. Denote by $[g]\in
\hecke{G}{\Gamma}$ the characteristic function of the double coset
$\Gamma g\Gamma$. The Hecke algebra $\hecke{G}{\Gamma}$ is faithfully represented on $\ell^2(\Gamma\backslash G)$ by
$$
(f\xi)(g)=\sum_{h\in\Gamma\backslash G}f(gh^{-1})\xi(h)\ \ \hbox{for}\ \ f\in\hecke{G}{\Gamma}\ \ \hbox{and}\ \ \xi\in\ell^2(\Gamma\backslash G).
$$
Denote by $\redheck{G}{\Gamma}$ the closure of $\hecke{G}{\Gamma}$ in this representation. The C$^*$-algebra $\redheck{G}{\Gamma}$ carries a canonical action of $\R$ defined by $[g]\mapsto\Delta(g)^{-it}[g]$.

\begin{proposition} \label{pcross}
The C$^*$-algebra $\redheck{P^+_K}{P^+_\OO}$ is isomorphic to
$$
\chf_{\hat\OO/\overline{\OO^*_+}}(C_0(\akf/\overline{\OO^*_+})
\rtimes_\alpha(K^*_+/\OO^*_+))\chf_{\hat\OO/\overline{\OO^*_+}},
$$
where the action $\alpha$ of $K^*_+/\OO^*_+$ on $C_0(\akf/\overline{\OO^*_+})$ is defined by $\alpha_x(f)=f(x^{-1}\cdot)$. Furthermore, the isomorphism can be chosen such that the canonical action of $\R$ on $\redheck{P^+_K}{P^+_\OO}$ corresponds to the restriction to the corner of the action $\sigma$ on the crossed product defined by
$$
\sigma_t(fu_x)=N_K(x)^{-it}fu_x\ \ \hbox{for}\ \ f\in C_0(\akf/\overline{\OO^*_+})\ \ \hbox{and}\ \ x\in K^*_+/\OO^*_+,
$$
where the $u_x$ are the canonical unitaries implementing $\alpha$.
\end{proposition}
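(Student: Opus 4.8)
The plan is to realize $\redheck{P^+_K}{P^+_\OO}$ first as a semigroup crossed product and then to dilate it to the stated corner in a group crossed product. The natural semigroup is $S=\OO^\times_+/\OO^*_+$, the classes of totally positive integers modulo totally positive units; it is an abelian, cancellative, hence Ore semigroup, and its enveloping group is exactly $K^*_+/\OO^*_+$ (every $w\in K^*_+$ is a quotient $x/x'$ of two totally positive integers, since a positive rational integer clearing the denominator of $w$ is itself totally positive). Thus $K^*_+/\OO^*_+$ is the group of fractions of $S$, which is what makes a dilation argument of the type used for $\Q$ in \cite{LRbcalg} available.

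First I would analyze the Hecke algebra directly in its defining representation on $\ell^2(P^+_\OO\backslash P^+_K)$. A computation with the $ax+b$ law identifies the coset space with $\OO^*_+\backslash\big((K/\OO)\times K^*_+\big)$, the class $a\in\OO^*_+$ acting by $a(\bar y,x)=(a\bar y,ax)$. I would then single out two families of double cosets: the \emph{multiplicative} generators $\mu_x=[\matr{0}{x}]$ for $x\in\OO^\times_+$, and the \emph{additive} double cosets $[\matr{y}{1}]$. The relation $\Delta\matr{0}{x}=N_K(x)^{-1}$ from \lemref{heckepairs} reflects that $[\hat\OO:x\hat\OO]=N_K(x)$, so that, suitably normalized, each $\mu_x$ is an isometry whose range projection is the characteristic function of $x\hat\OO/\ocl$; the $\mu_x$ therefore implement the endomorphisms $f\mapsto f(x^{-1}\cdot)$ dual to multiplication by $x$. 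The additive double cosets generate a commutative subalgebra which, via Pontryagin duality between the compact group $\hat\OO$ and $K/\OO$ together with the $\OO^*_+$-action, I would identify with $C(\hat\OO/\ocl)$. Assembling these, the covariance relations present $\redheck{P^+_K}{P^+_\OO}$ as the semigroup crossed product $C(\hat\OO/\ocl)\rtimes S$.

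With this in hand the second step is formal: since $S$ is Ore with enveloping group $K^*_+/\OO^*_+$ and the endomorphisms dilate to the automorphisms $\alpha_x(f)=f(x^{-1}\cdot)$, the dilation procedure (as in the rational case of \cite{LRbcalg}) realizes $C(\hat\OO/\ocl)\rtimes S$ as a full corner, cut by $\chf_{\hat\OO/\ocl}$, in $C_0(\akf/\ocl)\rtimes_\alpha(K^*_+/\OO^*_+)$; here one uses $\akf=\bigcup_{x\in\OO^\times_+}x^{-1}\hat\OO$, so that $C_0(\akf/\ocl)=\varinjlim_x C(x^{-1}\hat\OO/\ocl)$ is the corresponding direct limit. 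Transporting the canonical dynamics is then immediate: under the identification sending $\mu_x$ to the corner element $\chf_{\hat\OO/\ocl}u_x\chf_{\hat\OO/\ocl}$, the rule $[g]\mapsto\Delta(g)^{-it}[g]$ with $\Delta\matr{0}{x}^{-it}=N_K(x)^{it}$ becomes $\sigma_t(fu_x)=N_K(x)^{it}fu_x$, exactly as claimed.

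The main obstacle is the first step, and specifically the identification of the additive subalgebra with $C(\hat\OO/\ocl)$ rather than with $C(\hat\OO)$. This is the one genuinely new feature compared with the rational case: when $K$ has unit rank at least one, $\OO^*_+$ is infinite, the double cosets $[\matr{y}{1}]$ automatically average over the $\OO^*_+$-orbit of $y$, and one must pass to the closure $\ocl$ in the finite ideles to make the quotient space and its function algebra precise. This is precisely the reason for embedding the pair into the topological Hecke pair of \lemref{heckepairs}: it is in the completion that $\ocl$ is a genuine compact group acting on the compact space $\hat\OO$, so that $C(\hat\OO/\ocl)$ and the partial isometries $\mu_x$ acquire their clean description.
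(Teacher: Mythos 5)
Your proposal is correct in outline but follows a genuinely different route from the paper. You present $\redheck{P^+_K}{P^+_\OO}$ first as the semigroup crossed product $C(\hat\OO/\ocl)\rtimes(\OO^\times_+/\OO^*_+)$ by a direct analysis of additive and multiplicative double cosets (the method of \cite{LRbcalg} and of \cite[Theorem~2.5]{LFr}), and then dilate the Ore semigroup $\OO^\times_+/\OO^*_+$ to its enveloping group $K^*_+/\OO^*_+$ via \cite{Ldil} to land in the stated full corner. The paper instead never touches individual double cosets: it embeds the pair densely into the topological Hecke pair $(\bar P^+_\OO,\bar P^+_K)$ of \lemref{heckepairs}, identifies the Hecke C$^*$-algebra with the corner $pC^*_r(\bar P^+_K)p$ for the projection $p=p_1p_2$ of the compact open subgroup, Fourier-transforms $C^*_r(\akf)$ into $C_0(\akf)$ so that $p_1$ becomes $\chf_{\hat\OO}$, and then shows that compressing by $p_2$ converts the crossed product by $\overline{K^*_+}$ into the crossed product by the discrete group $K^*_+/\OO^*_+$, the key input being $K^*_+\cap\ocl=\OO^*_+$ together with a covariant-representation argument. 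The paper's route buys a clean, computation-free derivation in which the appearance of the closures $\ocl$ and $\kcl$ is automatic, and it explicitly records (in the remark following the proposition, citing \cite[Theorems~2.1 and~2.4]{Ldil}) that the corner is your semigroup crossed product; your route buys the presentation by generators $\mu_x$ and additive elements along the way, but its first step carries the real burden: you must verify that the Hecke convolution of the additive double cosets matches pointwise multiplication under Pontryagin duality, that the resulting $\OO^*_+$-invariant functions exhaust $C(\hat\OO/\ocl)$ (equivalently, that $\OO^*_+$- and $\ocl$-invariance coincide and the invariant characters separate points of $\hat\OO/\ocl$), and that the covariant representation of $C(\hat\OO/\ocl)\rtimes(\OO^\times_+/\OO^*_+)$ on $\ell^2(P^+_\OO\backslash P^+_K)$ is faithful. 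You correctly flag the closure issue as the new feature beyond the rational case, but these verifications are exactly the content of \cite[Theorem~2.5]{LFr} and remain to be carried out (or explicitly imported) for your argument to be complete.
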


\bp This is analogous to \cite[Theorem 2.5]{LFr}, so we will be relatively brief. We will use an argument similar to the one in~\cite[Section~3.1]{LLNfcm}.

Consider the groups $\bar P^+_K$ and  $\bar P^+_\OO$ from the previous
lemma. Then $\redheck{\bar P^+_K}{\bar P^+_\OO}$ is canonically
isomorphic to $pC^*_r(\bar P^+_K)p$, where $p=\int_{\bar
  P^+_\OO}u_gd\lambda(g)$ is the projection corresponding to the
compact open  subgroup $\bar P^+_\OO$ (the Haar measure $\lambda$ is assumed
to be normalized so that the measure of $\bar P^+_\OO$ is one). The
projection $p$ is the product of two commuting projections $p_1$ and
$p_2$ corresponding to the subgroups $\matr{\hat\OO}{1}$ and
$\matr{0}{\overline{\OO^*_+}}$, respectively. Since $\bar P^+_K$ is a
semidirect product of $\akf$ and $\overline{K^*_+}$, the C$^*$-algebra
$C^*_r(\bar P^+_K)$ is isomorphic to
$C^*_r(\akf)\rtimes\overline{K^*_+}$. The group $\akf$ is selfdual; we
normalize the isomorphism $\widehat{\akf}\cong\akf$ by requiring that the annihilator of $\hat\OO$ is again $\hat\OO$. Then the image of the projection~$p_1$ under the isomorphism $C^*_r(\akf)\to C_0(\akf)$ is $\chf_{\hat\OO}$. Therefore
\begin{equation} \label{ecorner}
pC^*_r(\bar P^+_K)p\cong\chf_{\hat\OO}p_2(C_0(\akf)\rtimes\overline{K^*_+})p_2\chf_{\hat\OO}.
\end{equation}
The projection $p_2$ corresponding to the subgroup $\overline{\OO^*_+}$ of $\overline{K^*_+}$ commutes with the unitaries $u_x$, $x\in \overline{K^*_+}$, and $p_2C_0(\akf)p_2=C_0(\akf/\overline{\OO^*_+})p_2$. Therefore
$$
p_2(C_0(\akf)\rtimes\overline{K^*_+})p_2=p_2(C_0(\akf/\overline{\OO^*_+})\rtimes\overline{K^*_+})p_2.
$$
We have a surjective $*$-homomorphism $C_0(\akf/\overline{\OO^*_+})\rtimes(K^*_+/\OO^*_+)\to p_2(C_0(\akf/\overline{\OO^*_+})\rtimes\overline{K^*_+})p_2$ which maps $f\in C_0(\akf/\overline{\OO^*_+})$ to $fp_2$ and $u_{\bar x}$, $\bar x\in K^*_+/\OO^*_+$, to $u_xp_2$, where $x\in K^*_+$ is any representative of $\bar x$. To see that this is an isomorphism, assume we have a covariant pair of representations of $C_0(\akf/\overline{\OO^*_+})$ and $K^*_+/\OO^*_+$. Since $K^*_+\cap\overline{\OO^*_+}=\OO^*_+$, the unitary representation of $K^*_+/\OO^*_+$ defines a continuous representation of $\overline{K^*_+}$ with kernel containing $\overline{\OO^*_+}$. Thus we get a covariant pair of representations of $C_0(\akf/\overline{\OO^*_+})$ and $\overline{K^*_+}$ such that the corresponding representation of the crossed product maps $p_2$ into one. Therefore any representation of $C_0(\akf/\overline{\OO^*_+})\rtimes(K^*_+/\OO^*_+)$ factors through $$p_2(C_0(\akf/\overline{\OO^*_+})\rtimes\overline{K^*_+})p_2.$$ Thus
$
p_2(C_0(\akf/\overline{\OO^*_+})\rtimes\overline{K^*_+})p_2\cong C_0(\akf/\overline{\OO^*_+})\rtimes(K^*_+/\OO^*_+),
$
which together with \eqref{ecorner} gives the result.
\ep
The corner $\chf_{\hat\OO/\overline{\OO^*_+}}(C_0(\akf/\overline{\OO^*_+})
\rtimes(K^*_+/\OO^*_+))\chf_{\hat\OO/\overline{\OO^*_+}}$ can also be viewed as the semigroup crossed product
$
C(\hat\OO/\overline{\OO^*_+})\rtimes(\OO^\times_+/\OO^*_+),
$ see \cite[Theorems 2.1 and 2.4]{Ldil}.

\smallskip

As a consequence of the above proposition we see that the group $\hat\OO^*/\overline{\OO^*_+}$ acts on $\redheck{P^+_K}{P^+_\OO}$; the action is however noncanonical, as the isomorphism in the proposition depends on the choice of the isomorphism $\widehat{\akf}\cong\akf$. Recall that by Proposition~\ref{pgalois} we have $\hat\OO^*/\overline{\OO^*_+}\cong\gal(\kab/H_+(K))$.

\smallskip

By Proposition~\ref{pcross} the C$^*$-algebra $\redheck{P^+_K}{P^+_\OO}$ is a full corner in the crossed product algebra defined by the action of $K^*_+/\OO^*_+$ on $\akf/\overline{\OO^*_+}$. We now induce this action via the inclusion $K^*_+/\OO^*_+\cong \mathcal P_{K,+}\hookrightarrow J_K$ of totally
positive principal fractional ideals into all fractional ideals:
$$
X^+_K:=J_K\times_{K^*_+/\OO^*_+}(\akf/\overline{\OO^*_+}).
$$
We equip the crossed product $C_0(X_K^+)\rtimes J_K$
with the dynamics given by
\begin{equation}\label{edynamo}
\sigma^{K,+}_t(fu_g)=N_K(g)^{it}fu_g\ \ \hbox{for}\ \ f\in C_0(X^+_K)\
\ \hbox{and}\ \ g\in J_K,
\end{equation}
where $N_K(g)$ denotes the norm of a fractional ideal $g$. Note that if $g=(x)$ for some $x\in K$, then $N_K(g)=N_K(x)^{-1}$.
Consider also the subset $Y^+_K\subset X^+_K$ defined by
$$
Y^+_K=\{(g,\omega)\in X^+_K\mid g\omega\in \hat\OO/\hat\OO^*\}.
$$
Here we think of $g\in J_K$ as an element of $\akf^*/\hat\OO^*$; then $g\omega$ is a well-defined element of $\akf/\hat\OO^*$. In other words, if we identify $X_K^+$ with a quotient of $\akf^*\times\akf$, then $Y^+_K$ is the image of $\{(g,\omega)\in\akf^*\times\akf\mid g\omega\in\hat\OO\}$. Since $\hat\OO$ is compact and open in $\akf$ and $K^*_+/\OO^*_+$ has finite index in $J_K$, the set $Y_K^+$ is compact and open in $X_K^+$. We put
$$
A_K^+=\chf_{Y^+_K}(C_0(X_K^+)\rtimes J_K)\chf_{Y^+_K}=C(Y_K^+)\rtimes J_K^+,
$$
where $J_K^+\subset J_K$ is the subsemigroup of integral ideals. Since $\sigma^{K,+}$ fixes
$\chf_{Y_K^+}$, it restricts to a dynamics on $A_K^+$, which we continue to
denote by $\sigma^{K,+}$.
Thus, starting from the Hecke algebra $\redheck{P^+_K}{P^+_\OO}$, we have constructed a C$^*$-dynamical system $(A^+_K,\sigma^{K,+})$.

\smallskip

On the other hand, the Bost-Connes system associated with $K$ is defined as follows~\cite{HP,LLNlat}. Consider the balanced
product $X_K=\gal(\kab/K)\times_{\hat\OO^*}\akf$, the induction of the
multiplication action of $\hat\OO^*$ on $\akf$ via the restriction
of the Artin map $\ak^*\to\gal(\kab/K)$ to $\hat\OO^*$.
This space has a natural action of $J_K$, induced from the action of
$\akf^*$ on $\gal(\kab/K)\times\akf$ given by $g(\gamma,x)=(\gamma
r_K(g)^{-1}, gx)$.  Consider the crossed product C$^*$-algebra $C_0(X_K)\rtimes J_K$. Define a dynamics by the same formula as in~\eqref{edynamo}:
$$
\sigma^{K}_t(fu_g)=N_K(g)^{it}fu_g\ \ \hbox{for}\ \ f\in C_0(X_K)\
\ \hbox{and}\ \ g\in J_K.
$$
To define the Bost-Connes system, we pass to the corner
$$
A_K:=\chf_{Y_K}(C_0(X_K)\rtimes J_K)\chf_{Y_K},
$$
corresponding to the compact subspace
$Y_K=\gal(\kab/K)\times_{\hat\OO^*}\hat\OO$.  Since $\sigma^K$ fixes
$\chf_{Y_K}$, it restricts to a dynamics on $A_K$, which we continue to
denote by $\sigma^K$.

\begin{lemma}\label{lxhomeo}
The map $\phi\colon\akf^*\times\akf\to \akf^*\times\akf$, $\phi(x,y)=(x^{-1},xy)
$ induces a $J_K$-equivariant homeomorphism $X_K\cong X_K^+$.
In this homeomorphism $Y_K$ is mapped onto $Y^+_K$, and the set
$$Z_{H_+(K)}=\gal(\kab/H_+(K))\times_{\hat\OO^*}\hat\OO\subset Y_K$$
is mapped  onto $i(\hat\OO/\overline{\OO^*_+})=\{\OO\}\times\hat\OO/\overline{\OO^*_+}$, where $i$ is the canonical embedding $\akf/\overline{\OO^*_+}\hookrightarrow X^+_K$.
\end{lemma}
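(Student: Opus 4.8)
The plan is to exhibit both $X_K$ and $X_K^+$ as quotients of the single space $\akf^*\times\akf$ by explicit commutative group actions and then check that $\phi$ intertwines them; note first that $\phi$ is an involution, since $\phi(\phi(x,y))=(x,y)$, so it is a homeomorphism of $\akf^*\times\akf$. Using \proref{pgalois}, the surjection $\tilde r_K\colon\akf^*\to\gal(\kab/K)$ with kernel $\kcl$ lets me present $X_K=\gal(\kab/K)\times_{\hat\OO^*}\akf$ as the quotient of $\akf^*\times\akf$ by the action of $\kcl\times\hat\OO^*$ given by $(c,h)\cdot(a,x)=(ach^{-1},hx)$, where the $c$-part records the kernel of $\tilde r_K$ and the $h$-part the balancing. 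Unwinding the definition of $X_K^+$ likewise presents it as the quotient of $\akf^*\times\akf$ by the action of $K^*_+\times\hat\OO^*\times\ocl$ given by $(s,u,v)\cdot(a,y)=(as^{-1}u,syv)$, where $s\in K^*_+\cong\mathcal P_{K,+}$ is the balancing group, $u\in\hat\OO^*$ comes from $J_K=\akf^*/\hat\OO^*$, and $v\in\ocl$ from $\akf/\ocl$.

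I would then compute directly that $\phi((c,h)\cdot(a,x))=(a^{-1}c^{-1}h,\,acx)$ and that $(s,u,v)\cdot\phi(a,x)=(s,u,v)\cdot(a^{-1},ax)=(a^{-1}s^{-1}u,\,asxv)$. These agree exactly when $c=sv$ and $h=uv$. Every $(s,u,v)$ then produces $c=sv\in\kcl$ and $h=uv\in\hat\OO^*$, and conversely every $(c,h)$ arises once $c\in\kcl$ is written as $sv$ with $s\in K^*_+$, $v\in\ocl$; hence $\phi$ carries each orbit of the first action onto an orbit of the second and descends to a homeomorphism $X_K\cong X_K^+$. Equivariance is then immediate: representing $g\in J_K$ by $b\in\akf^*$, the action on $X_K$ is $(a,x)\mapsto(ab^{-1},bx)$ and on $X_K^+$ is $(a,y)\mapsto(ba,y)$, and $\phi(ab^{-1},bx)=(ba^{-1},ax)=g\cdot\phi(a,x)$.

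The two remaining identifications come from tracking distinguished conditions through $\phi$. Since $\phi(a,x)=(a^{-1},ax)$ sends the second coordinate to the product of the output coordinates, the condition $x\in\hat\OO$ defining (the preimage of) $Y_K$ becomes the condition that the product of coordinates lie in $\hat\OO$, which defines $Y_K^+$; thus $\phi(Y_K)=Y^+_K$. For $Z_{H_+(K)}$, \proref{pgalois} identifies the preimage of $\gal(\kab/H_+(K))$ in $\akf^*$ with $\hat\OO^*\kcl=\hat\OO^*K^*_+$, so the preimage of $Z_{H_+(K)}$ is $\{(a,x):a\in\hat\OO^*K^*_+,\ x\in\hat\OO\}$. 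Writing $a^{-1}=u_0s_0$ with $u_0\in\hat\OO^*$, $s_0\in K^*_+$ and acting by $(s_0,1,1)$, I would show each such point is equivalent to $(u_0,u_0^{-1}x)$ with $u_0\in\hat\OO^*$ and $u_0^{-1}x\in\hat\OO$, whose image lies in $\{\OO\}\times\hat\OO/\ocl=i(\hat\OO/\ocl)$; the reverse inclusion follows by applying $\phi$ to points $(a',y)$ with $a'\in\hat\OO^*$, $y\in\hat\OO$.

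The crux is the matching-of-orbits step, and it rests on the single arithmetic fact $\kcl=K^*_+\,\ocl$ (used above to split $c$, and again for $Z_{H_+(K)}$): it reconciles the closure $\kcl$ occurring in the $X_K$-relation with the honest group $K^*_+$ occurring in the $X_K^+$-relation, the discrepancy being absorbed by the factor $\ocl$ coming from $\akf/\ocl$. I would prove it by observing that $\ocl=\hat\OO^*\cap\kcl$ is open in $\kcl$ (because $\hat\OO^*$ is open in $\akf^*$), so $K^*_+\ocl$ is an open, hence closed, subgroup of $\kcl$ containing the dense subgroup $K^*_+$, and therefore equals $\kcl$.
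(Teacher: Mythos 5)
Your proof is correct and follows the same basic strategy as the paper's: realize both $X_K$ and $X_K^+$ as quotients of $\akf^*\times\akf$ and check that $\phi$ carries one defining equivalence relation onto the other. The execution differs in a way worth recording. The paper observes that $\phi$ intertwines the $\akf^*\times\akf^*$-action $(g,h)(x,y)=(gxh^{-1},hy)$ with the coordinate-swapped action, restricts to the subgroup $\kcl\times\hat\OO^*$, and computes the two quotients in stages; the final passage from $(\akf^*/\hat\OO^*)\times_{\kcl/\ocl}(\akf/\ocl)$ to $X_K^+$ rests on the identification $\kcl/\ocl=K^*_+/\OO^*_+$, which is asserted without proof. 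You instead present $X_K^+$ directly as the orbit space of the three-factor group $K^*_+\times\hat\OO^*\times\ocl$ and match orbits by explicit computation, which forces you to isolate and actually prove the surjectivity half of that identification, $\kcl=K^*_+\,\ocl$, via the open-subgroup argument ($K^*_+\ocl$ contains the open subgroup $\ocl$ of $\kcl$, hence is open and therefore closed in $\kcl$, and contains the dense subgroup $K^*_+$). That is the right argument, and your route has the minor advantage of not needing the injectivity half $K^*_+\cap\ocl=\OO^*_+$, since you only match orbits rather than build a group isomorphism. Your verifications of equivariance, of $\phi(Y_K)=Y_K^+$ via the product-of-coordinates observation, and of the image of $Z_{H_+(K)}$ using the preimage $K^*_+\hat\OO^*$ of $\gal(\kab/H_+(K))$ from \proref{pgalois} all agree with the paper's.
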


\begin{proof}
Take two copies of $\akf^*\times\akf$ with the left action of
$\akf^*\times\akf^*$ defined by
$(g,h)(x,y)=(gxh^{-1},hy)$.   Then $\phi((g,h)(x,y))=(h,g)\phi(x,y)$.
Restricting the action to the subgroup $\kcl\times\hat\OO^*$ of $\akf^*\times\akf^*$,
we get a homeomorphism
\begin{equation}
(\akf^*\times\akf)/(\kcl\times\hat \OO^*)\cong (\akf^*\times\akf)/(\hat\OO^*\times\kcl).\label{invol}
\end{equation}
To compute the quotient by $\kcl\times\hat\OO^*$, we can first divide
out by $\kcl$ (which acts only on the first component), and then by
$\hat\OO^*$ (which balances both).  The quotient by
$\hat\OO^*\times\kcl$ is similar.  Therefore the bijection \eqref{invol} gives
the first homeomorphism in
\begin{equation*}\label{doublequots}
(\akf^*/\kcl)\times_{\hat\OO^*}\akf\cong(\akf^*/\hat\OO^*)\times_{\kcl}\akf
\cong(\akf^*/\hat\OO^*)\times_{\kcl/\ocl}\akf/\ocl,
\end{equation*}
the second coming from the fact that $\ocl=\hat\OO^*\cap\kcl$ acts
trivially on $\akf^*/\hat\OO^*$. Since $\kcl/\ocl=K^*_+/\OO^*_+$, we get the desired homeomorphism
$X_K\cong X_K^+$ after identifications $\akf^*/\kcl\cong\gal(\kab/K)$ from Proposition~\ref{pgalois}, and
$\akf^*/\hat\OO^*\cong J_K$.

The map $\phi\colon\akf^*\times\akf\to \akf^*\times\akf$ is $\akf^*$-equivariant with respect to the action $g(x,y)=(xg^{-1},gy)$ on the first space and $g(x,y)=(gx,y)$ on the second. This implies that the homeomorphism $X_K\to X_K^+$ is $J_K$-equivariant.

The subset $Y_K\subset X_K$ is the image of the subset $\akf^*\times\OO\subset\akf^*\times\akf$, while $Y_K^+$ is the image of $\{(x,y)\mid xy\in\OO\}$. We have $\phi(\akf^*\times\OO)=\{(x,y)\mid xy\in\OO\}$, so the homeomorphism $X_K\to X_K^+$ maps $Y_K$ onto $Y_K^+$.

Finally, by Proposition~\ref{pgalois} the Galois group $\gal(\kab/H_+(K))$ is the image of $\hat\OO^*$ under the Artin map, so $\gal(\kab/H_+(K))\times_{\hat\OO^*}\hat\OO$ is the image of $\hat\OO^*\times\hat\OO\subset \akf^*\times\akf$ in $X_K$. It follows that the image of $\gal(\kab/H_+(K))\times_{\hat\OO^*}\hat\OO$ in~$X^+_K=J_K\times_{K^*_+/\OO^*_+}(\akf/\overline{\OO^*_+})$ is the image of $\hat\OO^*\times\hat\OO\subset \akf^*\times\akf$ under the quotient map, so it is $\{\OO\}\times\hat\OO/\overline{\OO^*_+}=i(\hat\OO/\overline{\OO^*_+})$.
\end{proof}

We can now state one of our main results.

\begin{theorem}\label{tiso}
The homeomorphism from \lemref{lxhomeo} gives rise to a canonical isomorphism of C$^*$-dynamical systems $(A_K,\sigma^K) \cong (A_K^+,\sigma^{K,+})$. This induces an isomorphism
\[
\redheck{P^+_K}{P^+_\OO} \cong p_KA_Kp_K
\]
of our Hecke algebra onto the corner of $A_K$ defined by the full projection $p_K$
corresponding to the compact open subset $Z_{H_+(K)} \subset Y_K$ from \lemref{lxhomeo}.
\end{theorem}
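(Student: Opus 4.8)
The plan is to prove the two asserted isomorphisms in turn: the first by transporting the crossed product through the $J_K$-equivariant homeomorphism of \lemref{lxhomeo}, and the second by recognizing $X_K^+$ as precisely the induced space to which \proref{pnoncomind} applies, after which everything reduces to comparing nested corner projections.

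First I would establish the dynamical isomorphism $(A_K,\sigma^K)\cong(A_K^+,\sigma^{K,+})$. The homeomorphism $\phi\colon X_K\to X_K^+$ of \lemref{lxhomeo} is $J_K$-equivariant, so $f\mapsto f\circ\phi^{-1}$ together with $u_g\mapsto u_g$ defines an isomorphism $C_0(X_K)\rtimes J_K\cong C_0(X_K^+)\rtimes J_K$. Since both dynamics are given by the single formula $fu_g\mapsto N_K(g)^{it}fu_g$ and depend only on $g\in J_K$, which is preserved, this isomorphism intertwines $\sigma^K$ and $\sigma^{K,+}$; and since $\phi(Y_K)=Y_K^+$ it carries $\chf_{Y_K}$ to $\chf_{Y_K^+}$, hence restricts to an equivariant isomorphism of the corners $A_K\cong A_K^+$. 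By \lemref{lxhomeo} it also carries $p_K=\chf_{Z_{H_+(K)}}$ to $p_K^+:=\chf_{i(\hat\OO/\overline{\OO^*_+})}$, so it suffices to identify $p_K^+A_K^+p_K^+$ with $\redheck{P^+_K}{P^+_\OO}$.

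Write $B:=C_0(X_K^+)\rtimes J_K$, so that $A_K^+=\chf_{Y_K^+}B\chf_{Y_K^+}$. The key observation is that $X_K^+=J_K\times_{K^*_+/\OO^*_+}(\akf/\overline{\OO^*_+})$ is exactly the induced space of \proref{pnoncomind} for the inclusion $\rho\colon K^*_+/\OO^*_+\hookrightarrow J_K$ and $X=\akf/\overline{\OO^*_+}$. Thus \proref{pnoncomind} yields $C_0(\akf/\overline{\OO^*_+})\rtimes(K^*_+/\OO^*_+)\cong \chf_{i(X)}B\chf_{i(X)}$, an isomorphism under which the coefficient function $\chf_{\hat\OO/\overline{\OO^*_+}}$ is sent to $\chf_{i(\hat\OO/\overline{\OO^*_+})}=p_K^+$, since $C_0(X)$ is identified with $C_0(i(X))$ via the clopen embedding $i$. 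Now both relevant projections dominate $p_K^+$: by \lemref{lxhomeo} we have $i(\hat\OO/\overline{\OO^*_+})\subseteq i(X)$ and $i(\hat\OO/\overline{\OO^*_+})\subseteq Y_K^+$, so $p_K^+\le\chf_{i(X)}$ and $p_K^+\le\chf_{Y_K^+}$. Cutting the isomorphism of \proref{pnoncomind} down by $p_K^+$ and using $p_K^+\le\chf_{i(X)}$ gives $\chf_{\hat\OO/\overline{\OO^*_+}}(C_0(\akf/\overline{\OO^*_+})\rtimes(K^*_+/\OO^*_+))\chf_{\hat\OO/\overline{\OO^*_+}}\cong p_K^+Bp_K^+$, while $p_K^+\le\chf_{Y_K^+}$ gives $p_K^+A_K^+p_K^+=p_K^+Bp_K^+$. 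By \proref{pcross} the left-hand side is $\redheck{P^+_K}{P^+_\OO}$ (and all identifications are graded by $N_K$, so they match the canonical Hecke dynamics), and chaining them produces the desired isomorphism.

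It remains to see that $p_K$ is full in $A_K$, equivalently that $p_K^+$ is full in $A_K^+$, and I would argue this in two corner steps. By the remark following \proref{pcross}, $\chf_{\hat\OO/\overline{\OO^*_+}}$ is full in $C_0(\akf/\overline{\OO^*_+})\rtimes(K^*_+/\OO^*_+)$, so $p_K^+$ is full in $\chf_{i(X)}B\chf_{i(X)}$; since $\chf_{i(X)}$ is full in $B$ by \proref{pnoncomind}, the Rieffel correspondence of ideals for the full corner $\chf_{i(X)}B\chf_{i(X)}\subseteq B$ shows that $p_K^+$ is full in $B$. Finally, from $p_K^+=\chf_{Y_K^+}p_K^+\chf_{Y_K^+}$ and $\overline{Bp_K^+B}=B$ one gets $\overline{(\chf_{Y_K^+}B\chf_{Y_K^+})p_K^+(\chf_{Y_K^+}B\chf_{Y_K^+})}=\chf_{Y_K^+}B\chf_{Y_K^+}=A_K^+$, so $p_K^+$ is full in $A_K^+$. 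I expect the genuine work to be this fullness argument, together with keeping the three nested corner projections $\chf_{Y_K^+}$, $\chf_{i(X)}$ and $p_K^+$ straight; the geometric heart of the matter has already been dispatched in \lemref{lxhomeo}, so the rest is a transport of structure through \proref{pcross} and \proref{pnoncomind}.
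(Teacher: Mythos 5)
Your proposal is correct and follows essentially the same route as the paper's proof: transport the dynamics through the $J_K$-equivariant homeomorphism of \lemref{lxhomeo}, then chain \proref{pnoncomind} with \proref{pcross} while keeping track of the nested corner projections. The only (immaterial) difference is in establishing fullness of $p_K$, which the paper dispatches in one line from the geometric identity $J_K\, i(\hat\OO/\overline{\OO^*_+})=X_K^+$, whereas you deduce it by passing fullness through two successive corners via the Rieffel correspondence.
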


\bp
It follows immediately from Lemma~\ref{lxhomeo} that
the homeomorphism of $X_K$ to $X_K^+$ induces an isomorphism
$(A_K,\sigma^K)\cong(A_K^+,\sigma^{K,+})$  mapping $p_KA_Kp_K$ onto
$$
\chf_{i(\hat\OO/\overline{\OO^*_+})}A_K^+\chf_{i(\hat\OO/\overline{\OO^*_+})}
=\chf_{i(\hat\OO/\overline{\OO^*_+})}(C_0(X_K^+)\rtimes J_K)\chf_{i(\hat\OO/\overline{\OO^*_+})}.
$$
By Proposition~\ref{pnoncomind}, the latter algebra is isomorphic to $\chf_{\hat\OO/\overline{\OO^*_+}}(C_0(\akf/\overline{\OO^*_+})\rtimes(K^*_+/\OO^*_+))\chf_{\hat\OO/\overline{\OO^*_+}}$, which is in turn isomorphic to $\redheck{P^+_K}{P^+_\OO}$ by Proposition~\ref{pcross}.
The projection $p_K$ is full because $J_Ki(\hat\OO/\overline{\OO^*_+})=X_K^+$.
\ep

Therefore the Bost-Connes system for $K$ can be constructed from $\redheck{P^+_K}{P^+_\OO}$ by first dilating the semigroup crossed product decomposition of the Hecke algebra to a crossed product by the group $\mathcal P_{K,+}\cong K^*_+/\OO^*_+$ of
principal fractional ideals with a totally positive generator, then inducing from $\mathcal P_{K,+}$ to $J_K$, and finally restricting to a natural corner.

\smallskip

As an easy application we can classify KMS-states of  the Hecke C$^*$-algebra
$\redheck{P^+_K}{P^+_\OO}\cong C(\hat\OO/\overline{\OO^*_+})\rtimes(\OO^\times_+/\OO^*_+)$ with respect to the canonical dynamics. To formulate the result, for an element $c$ of the narrow class group $\cl_+(K)$ denote by $\zeta(\cdot,c)$ the corresponding partial zeta function,
$$
\zeta(s,c)=\sum_{\aaa\in J_K^+\colon \aaa\in c}N_K(\aaa)^{-s}.
$$

\begin{theorem} \label{theckekms}
For the system $(C(\hat\OO/\overline{\OO^*_+})\rtimes(\OO^\times_+/\OO^*_+),\sigma)$ we have:
\enu{i} for every $\beta\in(0,1]$ there is a unique KMS$_\beta$-state, and it is of type III$_1$;
\enu{ii} for every $\beta\in(1,\infty)$ extremal KMS$_\beta$-states are of type I and are indexed by the subset $Y^+_{K,0}\subset X^+_K=J_K\times_{K^*_+/\OO^*_+}(\akf/\overline{\OO^*_+})$ defined by
$
Y^+_{K,0}=\{(g,\omega)\mid g\omega\in\hat\OO^*/\hat\OO^*\}$; explicitly, the state~$\varphi_{\beta,x}$ corresponding to $x=(g,\omega)\in Y^+_{K,0}$ factors through the canonical conditional expectation onto $C(\hat\OO/\overline{\OO^*_+})$, and on $C(\hat\OO/\overline{\OO^*_+})$ it is given by
$$
\varphi_{\beta,x}(f)=\frac{1}{\zeta(\beta,c_x)}\sum_{h\in (K^*_+/\OO^*_+)\cap gJ_K^+}N_K(hg^{-1})^{-\beta}f(h\omega),
$$
where $c_x\in\cl_+(K)$ is the class of $g^{-1}$.
\end{theorem}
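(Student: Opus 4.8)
The plan is to transport the known classification of KMS-states for the Bost–Connes system $(A_K,\sigma^K)$ — available from \cite{LLNlat} — across the isomorphism $\redheck{P^+_K}{P^+_\OO}\cong p_KA_Kp_K$ furnished by \thmref{tiso}, and then to read off what this restriction to the corner does to the parametrizing set and to the state formula.

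First I would recall that for $(A_K,\sigma^K)$ the KMS-structure is as follows: for $\beta\in(0,1]$ there is a unique KMS$_\beta$-state which is of type III$_1$, while for $\beta>1$ the extremal KMS$_\beta$-states are of type I and are parametrized by the points of $Y_K$ lying over the ``invertible'' locus $\gal(\kab/K)\times_{\hat\OO^*}\hat\OO^*$, each such state factoring through the conditional expectation onto $C(Y_K)$ with an explicit Gibbs-type formula weighted by $N_K(\aaa)^{-\beta}$ over integral ideals. The key observation is that compression by a full projection $p_K$ sets up an affine isomorphism between the KMS$_\beta$-simplices of $A_K$ and of $p_KA_Kp_K$: a state $\psi$ on $A_K$ restricts to a (renormalized) KMS$_\beta$-state $p_K\psi p_K/\psi(p_K)$ on the corner provided $\psi(p_K)\ne0$, and fullness of $p_K$ together with the fact that $\sigma^K$ fixes $\chf_{Z_{H_+(K)}}$ guarantees this assignment is a bijection preserving extremality and type. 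Part~(i) then follows immediately, since the unique state for $\beta\le1$ has full support (any KMS-state for the dissipative regime is faithful), so it survives compression and stays type III$_1$.

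For part~(ii) the substance is identifying which extremal states survive, i.e.\ have $\psi(p_K)\ne0$, and then computing the compressed formula. Under the homeomorphism of \lemref{lxhomeo}, $Y_K\cong Y^+_K$ and $Z_{H_+(K)}$ maps onto $i(\hat\OO/\overline{\OO^*_+})$. The extremal states of $A_K$ are indexed by the invertible locus, which under $\phi$ becomes exactly $Y^+_{K,0}=\{(g,\omega)\mid g\omega\in\hat\OO^*/\hat\OO^*\}$; I would check that $\varphi_{\beta,x}(\chf_{Z_{H_+(K)}})\ne0$ for every such $x$, which amounts to verifying that the orbit of $x$ under $J_K^+$ meets the support $i(\hat\OO/\overline{\OO^*_+})$ with positive Gibbs weight — this is where the condition $h\in(K^*_+/\OO^*_+)\cap gJ_K^+$ enters, selecting precisely the integral-ideal translates of $x$ that land in the corner. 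Transporting the conditional-expectation formula through $\phi$ and through \proref{pnoncomind} (which replaces $C_0(X_K^+)\rtimes J_K$ near the corner by $C_0(\akf/\overline{\OO^*_+})\rtimes(K^*_+/\OO^*_+)$) then converts the sum over integral ideals into the displayed sum over $h$, and the overall normalization $\zeta(\beta,c_x)^{-1}$ emerges as $\psi(p_K)^{-1}$ after recognizing that the integral-ideal orbit of $x$ restricted to the corner is indexed by the narrow ideal class $c_x$ of $g^{-1}$.

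\textbf{The main obstacle} I anticipate is not the abstract corner/compression bijection, which is formal, but the bookkeeping in part~(ii): one must track a point $x\in Y^+_{K,0}$ through two successive identifications — the involutive homeomorphism $\phi$ of \lemref{lxhomeo} and the induction isomorphism of \proref{pnoncomind} — while simultaneously keeping the dynamics and the Gibbs weights straight, so that the abstract restriction of the type~I extremal state of $A_K$ really does collapse to the compact formula with summation set $(K^*_+/\OO^*_+)\cap gJ_K^+$ and partition function $\zeta(\beta,c_x)$. In particular I would need to verify carefully that the normalization by $\psi(p_K)$ equals $\zeta(\beta,c_x)$ (up to the convergence that $\beta>1$ guarantees) and that distinct points of $Y^+_{K,0}$ give distinct extremal states after compression, i.e.\ that no extremal state is annihilated by $p_K$; fullness of $p_K$ makes the latter automatic, but matching the explicit formula is the genuinely computational step.
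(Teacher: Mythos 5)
Your proposal is correct and follows essentially the same route as the paper: identify the Hecke system with the full corner $p_KA_Kp_K$ via \thmref{tiso}, use the bijection $\varphi\mapsto\varphi(p_K)^{-1}\varphi|_{p_KA_Kp_K}$ between KMS$_\beta$-states (which the paper justifies via the Morita-equivalence result of \cite{LN}, noting as you do that fullness of $p_K$ makes non-vanishing automatic), import the classification from \cite{LLNlat} and \cite{nes2}, and then track the parametrizing set and the Gibbs measure through the homeomorphism of \lemref{lxhomeo} and the induction isomorphism to arrive at the sum over $(K^*_+/\OO^*_+)\cap gJ_K^+$ normalized by the partial zeta function. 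The bookkeeping you flag as the main obstacle is exactly the computation the paper carries out, phrased there in terms of the scaling measure $\mu_{\beta,x}$ concentrated on $J_K^+x$ and its pullback $i^{-1}(J_K^+x)$.
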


\bp By Theorem~\ref{tiso} the system
$(C(\hat\OO/\overline{\OO^*_+})\rtimes(\OO^\times_+/\OO^*_+),\sigma)$
is isomorphic to the full corner $(p_KA_Kp_K,\sigma^K)$ of the
Bost-Connes system. By \cite[Theorem 3.2]{LN} there is a one-to-one correspondence between
KMS-weights of equivariantly Morita
equivalent algebras. In our case we deal with unital C$^*$-algebras, so every densely defined weight is finite. Therefore for every $\beta\in\R$ the map $\varphi\mapsto\varphi(p_K)^{-1}\varphi|_{p_KA_Kp_K}$ is a bijection between KMS$_\beta$-states on~$A_K$ and those on $p_KA_Kp_K$. A more elementary way to check that this is a bijection (at least for $\beta\ne0$) is to apply \cite[Proposition~1.1]{LLNlat} to reduce the study of KMS-states for both systems to a study of measures satisfying certain scaling and normalization conditions. Once we have this bijection, we just have to translate the classification of KMS-states for the Bost-Connes system to our setting.

Part (i) is an immediate consequence of \cite[Theorem 2.1]{LLNlat} and \cite[Theorem 2.1]{nes2}.

As for part (ii), by \cite[Theorem 2.1]{LLNlat} for every
$\beta\in(1,+\infty)$ extremal KMS$_\beta$-states on $A_K$ are indexed
by the set $Y_{K,0}:=\gal(\kab/K)\times_{\hat\OO^*}\hat\OO^*\subset
Y_K$: the state corresponding to $x\in Y_{K,0}$ is defined by the
probability measure $\mu_{\beta,x}$ on $Y_K$ which is concentrated on
$J_K^+x$ and has the property
$\mu_{\beta,x}(hx)=N_K(h)^{-\beta}\mu_{\beta,x}(x)$ for $h\in
J_K^+$. It is easy to see that the homeomorphism $\phi:X_K\to X^+_K$
from Lemma~\ref{lxhomeo} maps $Y_{K,0}$ onto $Y^+_{K,0}$. Thus extremal
KMS$_\beta$-states for
$(C(\hat\OO/\overline{\OO^*_+})\rtimes(\OO^\times_+/\OO^*_+),\sigma)$
are indexed by the set $Y^+_{K,0}$. The state $\varphi_{\beta,x}$
corresponding to $x\in Y^+_{K,0}$ is defined by the
measure~$\nu_{\beta,x}$ which is concentrated on $i^{-1}(J_k^+x)$,
where $i\colon\akf/\overline{\OO^*_+}\hookrightarrow X^+_K$ is the
canonical embedding, and is determined by the property that
$\nu_{\beta,x}(i^{-1}(hx))=N_K(h)^{-\beta}c$ for every $h\in J_K^+$
such that $hx\in i(\hat\OO/\overline{\OO^*_+})$, where~$c$ is a
uniquely defined normalization constant. If $(g,\omega)\in
J_K\times(\akf/\overline{\OO^*_+})$ is a representative of $x\in
Y^+_{K,0}\subset
J_K\times_{K^*_+/\overline\OO^*_+}(\akf/\overline{\OO^*_+})$ then
$hgx\in i(\hat\OO/\overline{\OO^*_+})$ for $h\in J_K^+$ if and only if $hg\in K^*_+/\OO^*_+$, and then $i^{-1}(hx)=(hg)\omega$. Therefore $i^{-1}(J_K^+x)$ consists of points $h\omega$ with $h\in(K^*_+/\OO^*_+)\cap gJ_K^+$, so that, up to a normalization constant, the measure $\nu_{\beta,x}$ is
$$
\sum_{h\in (K^*_+/\OO^*_+)\cap gJ_K^+}N_K(hg^{-1})^{-\beta}\delta_{h\omega}.
$$
To get a probability measure we need to divide the above sum by $\zeta(\beta,c_x)$.
\ep

\begin{remark}\mbox{\ }
\enu{i} We can equivalently say that extremal KMS$_\beta$-states for $\beta>1$ are in a one-to-one correspondence with $K^*_+/\OO^*_+$-orbits in $\akf^*/\overline{\OO^*_+}$, that is, with the set $\akf^*/K^*_+\overline{\OO^*_+}=\akf^*/\overline{K^*_+}\cong\gal(\kab/K)$. Any such orbit carries a measure $\nu$, unique up to a scalar, such that $\nu(h\omega)=N_K(h)^{-\beta}\nu(\omega)$ if $h\in K^*_+$ and~$\omega$ lies on the orbit. With a suitable normalization the part of the orbit lying in $\hat\OO/\overline{\OO^*_+}$ defines a probability measure on~$\hat\OO/\overline{\OO^*_+}$ which gives the required state. The corresponding partition function is the partial zeta function defined by the class of the orbit in $\akf^*/\ohs K^*_+\cong\cl_+(K)$.
\enu{ii} Even if the classification of KMS-states for $(A_K,\sigma^K)$
were not known, it would still be convenient to induce from
$K^*_+/\OO^*_+$ to $J_K$ and work with $A_K$ instead of
$\redheck{P^+_K}{P^+_\OO}$.  Indeed, the action of~$K^*_+/\OO^*_+$
on~$\akf/\overline{\OO^*_+}$ is more complicated than that of $J_K$ on
$X_K$, e.g.~because $K^*_+/\OO^*_+$-orbits not passing through
$\hat\OO^*/\overline{\OO^*_+}$ do not have canonical representatives,
and one would be forced to consider the set of ideals of minimal norm
in their narrow class, analogously to \cite{LFr2}.  By
contrast, $J_K$-orbits in $X_K$ enter $Y_K$
at a unique point in $Y_{K,0}$. Furthermore, the group
$\gal(\kab/K)\cong\akf^*/\overline{K^*_+}$ acts on~$A_K$ and induces a
free transitive action on extremal KMS$_\beta$-states ($\beta>1$).
Only when restricted to
$\gal(\kab/H_+(K))\cong\hat\OO^*/\overline{\OO^*_+}$ does this
action come from automorphisms of the algebra
$\redheck{P^+_K}{P^+_\OO}$. The main reason why $A_K$ is easier to
study than~$\redheck{P^+_K}{P^+_\OO}$ is that the ordered group
$(J_K,J_K^+)$ is lattice-ordered,
unlike~$(K^*_+/\OO^*_+,\OO^\times_+/\OO^*_+)$ (an intersection of two
principal ideals need not be principal).
\enu{iii}  The induced space $X_K=\gal(\kab/K)\times_{\hat\OO^*}\akf$
comes with a natural action of $\gal(\kab/K)$, which in turn induces a
symmetry of the system defined by automorphisms of the algebra $A_K$,
and not just of the KMS$_\beta$-states.
This is different from the symmetry considered in \cite{cmr1},
which comes from the action of the semigroup $\hat\OO\cap\akf^*$ on $A_K$ by endomorphisms defined by the action of $\akf^*$ on the second coordinate of $X_K=\gal(\kab/K)\times_{\hat\OO^*}\akf$. The endomorphisms defined by elements of $\hat\OO\cap\overline{K^*_+}$ are inner, so one gets a well-defined action of $(\hat\OO\cap\akf^*)/(\hat\OO\cap\overline{K^*_+})\subset\gal(\kab/K)$ on KMS$_\beta$-states, which then extends to an action of the whole Galois group $\gal(\kab/K)$.

Despite the fact that  the  two actions of $\hat\OO\cap\akf^*$ differ significantly at the C$^*$-algebra level,
they actually coincide on KMS$_\beta$-states.
The reason is that they define the same actions on the space of $J_K$-orbits of points in $Y_K^0$.
\end{remark}

\bigskip

\section{Comparison with other Hecke systems}\label{srelation}
The C$^*$-algebra associated with the Hecke inclusion of full affine groups
$$
P_\OO:=\matr{\OO}{\OO^*}\subset P_K:=\matr{K}{K^*}
$$
was studied in \cite{LFr} and \cite{LFr2}. By \cite[Theorem 2.5]{LFr} the corresponding Hecke C$^*$-algebra $\redheck{P_K}{P_\OO}$ is isomorphic to a crossed product by the semigroup of principal ideals,
$$\chf_{\hat\OO/\overline{\OO^*}}(C_0(\akf/\overline{\OO^*})
\rtimes(K^*/\OO^*))\chf_{\hat\OO/\overline{\OO^*}}=C(\hat\OO/\overline{\OO^*})\rtimes(\OO^\times/\OO^*).$$

It is known that for imaginary quadratic fields of any class number
 these Hecke systems are Morita equivalent
to Bost-Connes systems \cite[Proposition 4.6]{cmr2}. We also know from
\cite[Remark  2.2(iii)]{LLNlat} that for totally imaginary fields $K$
of class number one the Hecke systems are actually isomorphic to the
Bost-Connes systems.  In this section we will generalize these results and show that for arbitrary number fields $\redheck{P_K}{P_\OO}$ embeds into the corner of $A_K$ corresponding to the Hilbert class field.

Our construction of the corner $p_KA_Kp_K$ works for any intermediate field $L$ between $K$ and its narrow Hilbert class field~$H_+(K)$. Namely, let $\tilde r_K\colon\akf^*\to\gal(\kab/K)$ be the restriction of the Artin
map to the finite ideles.  For $K\subset L\subset H_+(K)$, put
$U_L=\tilde r_K^{-1}(\gal(\kab/L))$.  We have $\akf^*=U_K\supset U_L\supset
U_{H_+(K)}=K^*_+\hat\OO^*$.  For example, when $L = H(K)$ is the Hilbert class field, we have $U_{H(K)}=K^*\hat\OO^*$. These descriptions of $U_K$, $U_{H(K)}$, and $U_{H_+(K)}$ are the content of  Proposition~\ref{pgalois}.

Put
$I_L=U_L/\ohs\subset J_K$. The action $g(x,y)=(xg^{-1},gy)$ of $U_L$
on $U_L\times\akf$ descends to an action of $I_L$ on $(U_L/\overline{K^*_+})\times_\ohs\akf\cong\gal(\kab/L)\times_\ohs\akf$. Then similarly to Theorem~\ref{tiso} we have the following result.

\begin{theorem}\label{tinter}
The map
$
\akf^*\times\akf\to\akf^*\times U_L\times\akf,
$
defined by $(x,y)\mapsto(x^{-1},1,xy),$ induces a $J_K$-equivariant homeomorphism
$$
\gal(\kab/K)\times_\ohs\akf\cong J_K\times_{I_L}(\gal(\kab/L)\times_\ohs\akf).
$$
This homeomorphism in turn induces an isomorphism of C$^*$-algebras
$$
q_LA_Kq_L\cong C(\gal(\kab/L)\times_\ohs\hat\OO)\rtimes I_L^+,
$$
where $q_L=\chf_{Z_L}$ is the projection  corresponding to the subset
$Z_L=\gal(\kab/L)\times_\ohs\hat\OO\subset Y_K$, and $I_L^+=I_L\cap J_K^+$ is the subsemigroup of integral ideals in $I_L$.
\end{theorem}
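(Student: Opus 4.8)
The plan is to run the argument of \thmref{tiso} (which is the special case $L=H_+(K)$), replacing the single induction by a two-stage induction through $I_L$. Write $W_L=J_K\times_{I_L}(\gal(\kab/L)\times_\ohs\akf)$ for the target space and $\psi$ for the map $(x,y)\mapsto(x^{-1},1,xy)$. Rather than checking directly that $\psi$ respects all the defining relations of the two balanced products, I would factor it. From the chain displayed in the proof of \lemref{lxhomeo} we already have a $J_K$-equivariant homeomorphism $X_K\cong J_K\times_{\kcl}\akf$, implemented by $(x,y)\mapsto(x^{-1},xy)$. Since $\kcl\subseteq K^*_+\ohs=U_{H_+(K)}\subseteq U_L$, the image of $\kcl$ in $J_K$ lies in $I_L$, so induction in stages for balanced products (the canonical homeomorphism $G\times_N Y\cong G\times_H(H\times_N Y)$) gives
$$J_K\times_{\kcl}\akf\cong J_K\times_{I_L}(I_L\times_{\kcl}\akf).$$
Performing the swap of \lemref{lxhomeo} one level down, now with $U_L$ in place of $\akf^*$ — legitimate because $\ohs,\kcl\subseteq U_L$ and $\ohs\cap\kcl=\ocl$ — identifies the inner space $I_L\times_{\kcl}\akf=(U_L/\ohs)\times_{\kcl}\akf$ with $(U_L/\kcl)\times_\ohs\akf\cong\gal(\kab/L)\times_\ohs\akf$. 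Composing the three maps reproduces $\psi$ on the nose, and each factor is $J_K$-equivariant, so $\psi$ descends to the asserted $J_K$-equivariant homeomorphism $X_K\cong W_L$.

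Next I would locate $Z_L$. As in \lemref{lxhomeo}, $Z_L=\gal(\kab/L)\times_\ohs\hat\OO$ is the image of $U_L\times\hat\OO$ in $X_K$; applying $\psi$ to a representative $(u,y)\in U_L\times\hat\OO$ and then the $I_L$-balancing relation with $w=u^{-1}$ to trivialize the outer $J_K$-coordinate shows that $\psi$ carries $Z_L$ onto $i(\gal(\kab/L)\times_\ohs\hat\OO)$, where $i$ is the canonical embedding of the inner space into $W_L$. Hence $q_LA_Kq_L=\chf_{Z_L}(C_0(X_K)\rtimes J_K)\chf_{Z_L}$ is carried onto $\chf_{i(\gal(\kab/L)\times_\ohs\hat\OO)}(C_0(W_L)\rtimes J_K)\chf_{i(\gal(\kab/L)\times_\ohs\hat\OO)}$. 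Now \proref{pnoncomind}, applied to $I_L\hookrightarrow J_K$ and the inner space, identifies $C_0(\gal(\kab/L)\times_\ohs\akf)\rtimes I_L$ with the corner by $\chf_{i(\gal(\kab/L)\times_\ohs\akf)}$; since $\gal(\kab/L)\times_\ohs\hat\OO$ is compact open in the inner space, cutting both sides by $\chf_{\gal(\kab/L)\times_\ohs\hat\OO}$ yields
$$q_LA_Kq_L\cong\chf_{\gal(\kab/L)\times_\ohs\hat\OO}\bigl(C_0(\gal(\kab/L)\times_\ohs\akf)\rtimes I_L\bigr)\chf_{\gal(\kab/L)\times_\ohs\hat\OO}.$$
Finally I would recognize this corner as the semigroup crossed product $C(\gal(\kab/L)\times_\ohs\hat\OO)\rtimes I_L^+$ via \cite[Theorems 2.1 and 2.4]{Ldil}, using that $I_L^+=I_L\cap J_K^+$ is exactly the set of $g\in I_L$ with $g\cdot(\gal(\kab/L)\times_\ohs\hat\OO)\subseteq\gal(\kab/L)\times_\ohs\hat\OO$. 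Throughout, $J_K$-equivariance of $\psi$ together with the fact that $N_K$ is defined on $J_K$ carries $\sigma^K$ to the norm dynamics, so the isomorphism is equivariant.

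I expect the main obstacle to be the honest verification of the homeomorphism: one must check that induction in stages is valid for these balanced products and that the swap of \lemref{lxhomeo} still produces a homeomorphism when carried out over the proper subgroup $U_L$ rather than over all of $\akf^*$. A secondary point is supplying the hypotheses of \cite{Ldil}, which require that $I_L^+$ generate $I_L$ as a group (equivalently, that $I_L^+$ be an Ore semigroup). This reduces to $J_K=I_L\cdot J_K^+$ — every class in the finite group $J_K/I_L\cong\gal(L/K)$ contains an integral ideal — which follows at once from $\mathcal P_{K,+}\subseteq I_L$ by clearing denominators with a totally positive integer, or from Chebotarev.
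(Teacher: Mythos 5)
The paper offers no proof of \thmref{tinter} beyond the remark that it follows ``similarly to Theorem~\ref{tiso}'', and your argument is a correct and complete instantiation of exactly that analogy: the coordinate swap of \lemref{lxhomeo} performed over $U_L$, combined with induction in stages through $I_L$, \proref{pnoncomind}, and the dilation theorem of \cite{Ldil}. Your closing remarks correctly isolate and dispose of the only points needing care --- the validity of the swap over the subgroup $U_L$ (which holds since $\ohs,\kcl\subseteq U_L$ and $\ohs\cap\kcl=\ocl$) and the Ore/generation condition for $I_L^+$ (settled by $\mathcal P_{K,+}\subseteq I_L$ and clearing denominators with a totally positive integer).
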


\begin{remark}
Recall from \cite{cmr1,LLNlat} that $A_K$ can be interpreted as the algebra of the equivalence relation of commensurability of $1$-dimensional $K$-lattices divided by (the closure of) the scaling action of~$K^o_\infty$. Then the subalgebra $q_LA_Kq_L$ corresponds to lattices that are up to scaling defined by ideals in~$I_L$. For $L=H_+(K)$ the algebra $q_LA_Kq_L$ has an interpretation as a Hecke algebra, and hence a presentation
derived from the multiplication table of double cosets. It would be interesting to see whether $q_LA_Kq_L$ has a similar natural presentation for other $L$.
\end{remark}

The relation between the Hecke algebra $\redheck{P_K}{P_\OO}$ from \cite{LFr} and the Bost-Connes algebra $A_K$ is obtained by setting $L$ to be the Hilbert class field.  The result generalizes Remark 33(b) in \cite{bos-con}, made for~$K=\Q$.

\begin{proposition} \label{phecke} We have
$
q_{H(K)}A_Kq_{H(K)}\cong C(\gal(\kab/H(K))\times_\ohs\hat\OO)\rtimes (\OO^\times/\OO^*)$ and
$$
q_{H(K)}A_K^{r_K(K^*_\infty)}q_{H(K)}=(q_{H(K)}A_Kq_{H(K)})^{r_K(K^*_\infty)}\cong \redheck{P_K}{P_\OO}.
$$
\end{proposition}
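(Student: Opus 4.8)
The plan is to read off the first isomorphism from \thmref{tinter} and to treat the second as a finite-group fixed-point computation. For the first statement I specialize \thmref{tinter} to $L=H(K)$. Here $U_{H(K)}=K^*\ohs$, so $I_{H(K)}=K^*\ohs/\ohs\cong K^*/\OO^*$, and its subsemigroup $I_{H(K)}^+=I_{H(K)}\cap J_K^+$ of integral ideals consists of the principal integral ideals, i.e.\ $I_{H(K)}^+=\OO^\times/\OO^*$. Plugging this into \thmref{tinter} gives $q_{H(K)}A_Kq_{H(K)}\cong C(\gal(\kab/H(K))\times_\ohs\hat\OO)\rtimes(\OO^\times/\OO^*)$, which is the first asserted isomorphism.

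For the second statement I would first note that $r_K(K^*_\infty)=\tilde r_K(K^*)$ is contained in $\gal(\kab/H(K))$, since $\gal(\kab/H(K))=\tilde r_K(K^*\ohs)\supseteq\tilde r_K(K^*)$ by \proref{pgalois}. Consequently the natural action of $\gal(\kab/K)$ on $A_K$ (left translation on the Galois factor of $X_K$) restricts on $r_K(K^*_\infty)$ to an action leaving $Z_{H(K)}=\gal(\kab/H(K))\times_\ohs\hat\OO$, and hence the projection $q_{H(K)}$, invariant; this already yields the equality $q_{H(K)}A_K^{r_K(K^*_\infty)}q_{H(K)}=(q_{H(K)}A_Kq_{H(K)})^{r_K(K^*_\infty)}$, because $q_{H(K)}$ lies in the fixed-point algebra and the two operations commute. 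Next I observe that the finite group $r_K(K^*_\infty)$ acts on $C(Z_{H(K)})\rtimes(\OO^\times/\OO^*)$ only through the coefficient algebra: it acts on $Z_{H(K)}$ by left translation of the Galois factor, while $\OO^\times/\OO^*$ acts by the simultaneous right translation $\gamma\mapsto\gamma\tilde r_K(a)^{-1}$ and multiplication $x\mapsto ax$, and it fixes the canonical unitaries. Left and right translations commute, so the two actions commute, and since $r_K(K^*_\infty)$ is finite, averaging over it shows that taking fixed points commutes with forming the crossed product:
$$(q_{H(K)}A_Kq_{H(K)})^{r_K(K^*_\infty)}\cong C\big(r_K(K^*_\infty)\backslash Z_{H(K)}\big)\rtimes(\OO^\times/\OO^*).$$

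It then remains to identify the quotient space together with the induced $\OO^\times/\OO^*$-action, and this is where I expect the main work. Since $r_K(K^*_\infty)$ and $\tilde r_K(\ohs)=\gal(\kab/H_+(K))$ together generate all of $\gal(\kab/H(K))$, the combined left action of $r_K(K^*_\infty)$ and the balancing $\ohs$-action can be used to move the Galois component of any point to the identity coset; the residual subgroup of $\ohs$ that still acts on $\hat\OO$ by multiplication is exactly $\ohs\cap K^*\kcl$. The crux is the class-field-theoretic identity $\ohs\cap K^*\kcl=\overline{\OO^*}$, which I would deduce from $r_K(K^*_\infty)\cap\gal(\kab/H_+(K))=\tilde r_K(\OO^*)$ — equivalently, from the standard description $\gal(H_+(K)/H(K))\cong\{\pm1\}^{r_1}/\operatorname{sgn}(\OO^*)$ of the group of signs at the $r_1$ real places modulo the signs of units — combined with $\ker(\tilde r_K|_\ohs)=\ocl$ from \proref{pgalois}. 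This gives $r_K(K^*_\infty)\backslash Z_{H(K)}\cong\hat\OO/\overline{\OO^*}$, and under the collapse the Galois twist $\tilde r_K(a)^{-1}$ attached to $a\in\OO^\times$ disappears (it already lies in $r_K(K^*_\infty)$), so the induced action of $\OO^\times/\OO^*$ is the multiplication action $[x]\mapsto[ax]$. Hence the fixed-point algebra is $C(\hat\OO/\overline{\OO^*})\rtimes(\OO^\times/\OO^*)$, which is $\redheck{P_K}{P_\OO}$ by the description recalled at the beginning of this section. The hardest step is pinning down $\ohs\cap K^*\kcl=\overline{\OO^*}$, that is, determining precisely which signs of units survive in the narrow class group.
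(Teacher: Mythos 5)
Your proposal is correct and follows essentially the same route as the paper: the first isomorphism is \thmref{tinter} with $L=H(K)$, the equality of corners comes from $r_K(K^*_\infty)\subset\gal(\kab/H(K))$, and the fixed-point algebra is computed by collapsing the Galois factor via $r_K(K^*_\infty)=\tilde r_K(K^*)$ to obtain $\gal(\kab/H(K))/r_K(K^*_\infty)\cong\ohs/\overline{\OO^*}$, whence $C(\hat\OO/\overline{\OO^*})\rtimes(\OO^\times/\OO^*)\cong\redheck{P_K}{P_\OO}$ by \cite[Theorem 2.5]{LFr}. The only remark worth making is that your ``hardest step'' $\ohs\cap K^*\kcl=\overline{\OO^*}$ needs no input about $\gal(H_+(K)/H(K))$: since $K^*\kcl=\overline{K^*}$ (a finite union of cosets of the closed subgroup $\kcl$) and $\ohs$ is open in $\akf^*$ with $\ohs\cap K^*=\OO^*$, every point of $\ohs\cap\overline{K^*}$ is a limit of elements of $K^*$ lying in $\ohs$, i.e.\ of units, so the identity is elementary.
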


Note that $r_K(K^*_\infty)$ is a finite group of order not bigger than $2^r$, where $r$ is the number of real embeddings of~$K$.

\bp[Proof of Proposition~\ref{phecke}]
The first isomorphism is just \thmref{tinter} with $L = H(K)$.
Since $r_K(K^*_\infty)\subset\gal(\kab/H(K))$, the projection $q_{H(K)}$ is $r_K(K^*_\infty)$-invariant, so $$q_{H(K)}A_K^{r_K(K^*_\infty)}q_{H(K)}=(q_{H(K)}A_Kq_{H(K)})^{r_K(K^*_\infty)}.$$
As was observed in the proof of Proposition~\ref{pgalois}, we have $r_K(K^*_\infty)=\tilde r_K(K^*)$. Therefore, using that $\gal(\kab/H(K))\cong K^*\ohs/\overline{K^*_+}$, we get
$$
\gal(\kab/H(K))/r_K(K^*_\infty)\cong K^*\ohs/K^*\overline{K^*_+}=K^*\ohs/\overline{K^*}\cong\ohs/\overline{\OO^*}.
$$
As $(\ohs/\overline{\OO^*})\times_\ohs\akf\cong\akf/\overline{\OO^*}$, we thus have an $I_{H(K)}$-equivariant homeomorphism between the quotient of $\gal(\kab/H(K))\times_\ohs\akf$ by the action of $r_K(K^*_\infty)$ and the space~$\akf/\overline{\OO^*}$, so that
$$
(C(\gal(\kab/H(K))\times_\ohs\hat\OO)\rtimes (\OO^\times/\OO^*))^{r_K(K^*_\infty)}\cong
C(\hat\OO/\overline{\OO^*})\rtimes (\OO^\times/\OO^*).
$$
Since the latter algebra is isomorphic to $\redheck{P_K}{P_\OO}$ by \cite[Theorem 2.5]{LFr} (see also \cite[Definition 2.2]{LFr}), we conclude that $(q_{H(K)}A_Kq_{H(K)})^{r_K(K^*_\infty)}\cong \redheck{P_K}{P_\OO}.$
\ep

\begin{remark}\mbox{\ }
\enu{i} Since $\gal(H_+(K)/K)\cong \akf^*/K^*_+\ohs\cong \cl_+(K)$ and $\gal(H(K)/K)\cong\akf^*/K^*\ohs\cong\cl(K)$, the fields $H_+(K)$ and $H(K)$ coincide if and only if $K^*_+/\OO^*_+=K^*/\OO^*$, that is, $K^*=\OO^*K^*_+$. In this case the above result implies that $\redheck{P_K}{P_\OO}$ is isomorphic to a fixed point subalgebra of~$\redheck{P_K^+}{P_\OO^+}$ under a finite group action. This is easy to see by definition of Hecke algebras: the isomorphism simply comes from the restriction map $\hecke{P_K}{P_\OO}\to\hecke{P^+_K}{P^+_\OO}$, $f\mapsto f|_{P^+_K}$, and as a finite group we can take $\OO^*/\OO^*_+$, with the action defined by conjugation by matrices $\diag{1}{x}$, $x\in\OO^*$. Observe that in this case the group $ r_K(K^*_\infty)\cong\overline{K^*}/\overline{K^*_+}\cong \overline{\OO^*}/\overline{\OO^*_+}$ is a quotient of $\OO^*/\OO^*_+$.
\enu{ii} The previous proposition can be used to apply the classification of KMS-states of the Bost-Connes system for $K$ to analyze KMS-states of $\redheck{P_K}{P_\OO}$. Namely, it follows from \cite[Proposition~1.1]{LLNlat} that for $\beta\ne0$ KMS$_\beta$-states on $\redheck{P_K}{P_\OO}$ are in a one-to-one correspondence with measures on
$$\akf/\overline{\OO^*}\cong(\ohs/\overline{\OO^*})\times_\ohs\akf
\cong(\gal(\kab/H(K))\times_\ohs\akf)/r_K(K^*_\infty)$$
satisfying certain scaling and normalization conditions. Any such  measure defines an $r_K(K^*_\infty)$-invariant measure on $\gal(\kab/H(K))\times_\ohs\akf$ satisfying similar conditions, hence it gives a KMS$_\beta$-state on the algebra $q_{H(K)}A_Kq_{H(K)}$. Thus we have a bijection between KMS$_\beta$-states on $\redheck{P_K}{P_\OO}$ and $r_K(K^*_\infty)$-invariant KMS$_\beta$-states on $q_{H(K)}A_Kq_{H(K)}$, or equivalently, on $A_K$. Using this we get a result for~$\redheck{P_K}{P_\OO}$ similar to Theorem~\ref{theckekms}, but with ``pluses erased". We leave details to the interested reader, limiting ourselves to pointing out that in this case the role of $Y^+_{K,0}$ is played
by the subset $\{(g,\omega)\mid g\omega\in\ohs/\ohs\}\cong\akf^*/\overline{K^*}\cong \gal(\kab/K)/r_K(K^*_\infty)$ of the set
$$
J_K\times_{K^*/\OO^*}(\akf/\overline{\OO^*})\cong (\akf^*/\overline{K^*})\times_{\hat\OO^*}\akf
\cong (\gal(\kab/K)\times_\ohs\akf)/r_K(K^*_\infty).
$$
In particular, for every $\beta>1$ we have a free transitive action of $\gal(\kab/K)/r_K(K^*_\infty)$ on the set of extremal KMS$_\beta$-states of $\redheck{P_K}{P_\OO}$. This completes and simplifies the analysis in~\cite{LFr2}.
\enu{iii} Another topological Hecke pair naturally associated with $K$ is
$$
\Gamma=\matr{\hat\OO}{\ohs}\subset G=\matr{\akf}{\akf^*}.
$$
The corresponding C$^*$-algebra is isomorphic to the symmetric part $A_K^{\gal(\kab/K)}$ of the Bost-Connes system for $K$. Indeed, if $p\in C_r^*(G)$ is the projection corresponding to the compact open subgroup~$\Gamma$ of $G$, then similarly to the proof of Proposition~\ref{pcross} we have
$$
\redheck{G}{\Gamma}=pC_r^*(G)p\cong\chf_{\hat\OO/\ohs}(C_0(\akf/\ohs)
\rtimes(\akf^*/\ohs))\chf_{\hat\OO/\ohs},
$$
and it remains to note that $\akf/\ohs=X_K/\gal(\kab/K)$.
\end{remark}

\bigskip

\section{Functoriality of Bost-Connes systems}\label{sfunctor}
Consider an embedding $\sigma\colon K\hookrightarrow L$ of number
fields. We also denote by $\sigma$ other embeddings which it induces,
e.g.~of $\ak\hookrightarrow\al$, $\akf^*\hookrightarrow\alf^*$,
$J_K\hookrightarrow J_L$, etc. Recall that the Bost-Connes system for~$K$ is constructed using an action of $J_K$ on $X_K=\gal(\kab/K)\times_{\hat\OO^*_K}\akf$. We induce this action to an action of $J_L$ by letting
$$
X_\sigma=J_L\times_{J_K}X_K,
$$
so $X_\sigma$ is the quotient of $J_L\times X_K$ by the action
$h(g,x)=(g\sigma(h)^{-1},hx)$ of $J_K$. We want to compare the action
of $J_L$ on $X_\sigma$ with that on $X_L$.

Consider the map $\sigma\times\sigma\colon \ak^*\times\akf\to\al^*\times\alf$. Identifying $X_K$ and $X_L$ with quotients of $\ak^*\times\akf$ and $\al^*\times\alf$, respectively, we then get a map $X_K\to X_L$, which we continue to denote by $\sigma$. Note that on the level of Galois groups it is defined using  the transfer map $V_{L/\sigma(K)}\colon \gal(\sigma(K)^{ab}/\sigma(K))\to\gal(\lab/L)$, see property~\eqref{efunct} of the Artin map in Section~\ref{sprelim}.

The map $\sigma\colon X_K\to X_L$ is $J_K$-equivariant in the sense that $\sigma(hx)=\sigma(h)\sigma(x)$ for $h\in J_K$ and $x\in X_K$. It follows that we have a well-defined map
$$
\pi_\sigma\colon X_\sigma\to X_L,\ \ \pi_\sigma(g,x)=g\sigma(x).
$$

\begin{lemma}
The map $\pi_\sigma\colon X_\sigma=J_L\times_{J_K}X_K\to X_L$ is $J_L$-equivariant and its image is dense.
\end{lemma}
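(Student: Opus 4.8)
The plan is to treat well-definedness and equivariance as formal, and then to grind the density statement down to a single concrete approximation fact on the finite adeles of $L$.

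First I would dispatch the algebraic part. Well-definedness of $\pi_\sigma$ on the quotient $X_\sigma=J_L\times_{J_K}X_K$ is the computation $\pi_\sigma(g\sigma(h)^{-1},hx)=g\sigma(h)^{-1}\sigma(hx)=g\sigma(h)^{-1}\sigma(h)\sigma(x)=g\sigma(x)=\pi_\sigma(g,x)$ for $h\in J_K$, which uses only the $J_K$-equivariance $\sigma(hx)=\sigma(h)\sigma(x)$ recorded just above the lemma together with the fact that $J_L$ acts on $X_L$ by automorphisms. For $k\in J_L$, equivariance is then immediate from associativity of the $J_L$-action: $\pi_\sigma(k(g,x))=\pi_\sigma(kg,x)=(kg)\sigma(x)=k(g\sigma(x))=k\pi_\sigma(g,x)$. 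In particular the image of $\pi_\sigma$ is exactly the $J_L$-saturation $J_L\cdot\sigma(X_K)$ of $\sigma(X_K)$.

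Next I would reduce density to the orbit space. Since the image is $J_L$-invariant and the quotient map $p_L\colon X_L\to J_L\backslash X_L$ is open, the image is dense in $X_L$ if and only if $p_L(\sigma(X_K))$ is dense in $J_L\backslash X_L$. Both orbit spaces linearize: writing a point of $X_L$ as a balanced class of a pair $(c,y)\in\alf^*\times\alf$ (with Galois part $\tilde r_L(c)$), one checks that $cy$ is invariant under the $J_L$-action $(c,y)\mapsto(cg^{-1},gy)$ and changes only by $\overline{L^*_+}$ under the balanced-product relation, so $(c,y)\mapsto\overline{L^*_+}\,cy$ is a continuous open bijection $J_L\backslash X_L\cong\overline{L^*_+}\backslash\alf$; the analogous map gives $J_K\backslash X_K\cong\overline{K^*_+}\backslash\akf$. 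Under these identifications $\pi_\sigma$ descends to the map induced by the inclusion $\sigma\colon\akf\to\alf$, so density of the image becomes density of the saturation $\overline{L^*_+}\sigma(\akf)$ in $\alf$. Because $1\in\sigma(\akf)$ and $L^*_+\subseteq\overline{L^*_+}$, we have $L^*_+\subseteq\overline{L^*_+}\sigma(\akf)$, so it suffices to show that $L^*_+$ is dense in $\alf$.

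The remaining step is the genuinely substantive one: density of the totally positive elements in the finite adeles. I would argue by approximation. Given a basic neighborhood of a target $y\in\alf$, cut out by finitely many finite places $S$, I would first use that $L$ is dense in $\alf$ (strong approximation for the additive group) to pick $\lambda_0\in L$ approximating $y$ at the places of $S$ and lying in $\OO_w$ for all finite $w\notin S$. I would then correct the archimedean signs by adding $\mu$ taken from a high power $\aaa=\prod_{w\in S}w^N$ of the primes of $S$: viewing the nonzero ideal $\aaa\subseteq\OO_L$ as a full lattice in $L_\infty=\prod_{v\mid\infty}L_v$, its projection to the real places meets the region where every real embedding is large, so one can choose $\mu\in\aaa$ whose real embeddings all dominate those of $\lambda_0$. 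Then $\lambda_0+\mu$ is totally positive, is $w$-adically close to $\lambda_0$ (hence to $y$) for $w\in S$ since $\mu\in w^N$, and remains integral away from $S$, so $\lambda_0+\mu\in L^*_+$ lies in the prescribed neighborhood. The main obstacle is precisely this decoupling of the archimedean positivity condition from the finite-adelic approximation; once it is arranged, density of $L^*_+$ follows, and with it the density of the image of $\pi_\sigma$. I expect the first two paragraphs to be routine and this last lattice/approximation argument to demand the only real care.
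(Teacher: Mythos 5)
Your proof is correct and follows essentially the same route as the paper: both reduce density of the image, via the induced-space structure of $X_L$, to the density of $L^*_+$ in $\alf$, and both establish that by first approximating with an element of $L$ and then adding a correction lying in high powers of the primes in $S$ to force total positivity. The only real difference is in that last correction: the paper simply takes the rational integer $(p_1\cdots p_s)^N$ with $p_i$ the primes below $S$, which is automatically totally positive and $S$-adically small for $N$ large, whereas you invoke a Minkowski-type lattice-point argument to find an element of $\prod_{w\in S}\pp_w^N$ with large real embeddings --- slightly heavier, but equally valid.
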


\bp Equivariance is clear. To show density it is enough to show that
the $J_L$-orbit of the point $(e,1)\in
X_L=\gal(\lab/L)\times_{\ohs_L}\alf$ is dense. By Lemma~\ref{lxhomeo}
we have a $J_L$-equivariant homeomorphism $X_L\to
J_L\times_{L^*_+/\OO^*_{L,+}}(\alf/\overline{\OO^*_{L,+}})$, which
maps $(e,1)$ into $(\OO_L,1)$. Therefore density of the $J_L$-orbit of
$(e,1)$ is equivalent to density of $L^*_+$ in $\alf$, and the latter can be showed as follows. Take an arbitrary open set
in $\alf$ of the form $U=\prod_{v\in S} U_v\times\prod_{v\notin
  S}\OO_v$ for some finite set of places~$S$.  We know that $L$ is dense in
$\alf$, so we can find an element $l\in L\cap U$.  Let $p_1, \dots, p_s$ be the
integer primes below the primes in $S$.  Take an integer $N$ big
enough for the integer $n=(p_1\dots p_s)^N$ to satisfy a) $n+U=U$ and
b) $n>\iota(-l)$ for all real embeddings $\iota\colon L\hookrightarrow\R$. Then $n+l\in L_+^*\cap U$.
\ep

The map $\pi_\sigma$ is not proper unless $\sigma(K)=L$, which can be seen e.g.~from Proposition~\ref{pinduction}(ii) below. It defines a $J_L$-equivariant injective homomorphism $C_0(X_L)\to C_b(X_\sigma)$, hence an injective homomorphism
$$
\pi_\sigma^*\colon C_0(X_L)\rtimes J_L\to M(C_0(X_\sigma)\rtimes J_L).
$$
On the other hand, we have a $J_K$-equivariant embedding $i_\sigma\colon X_K\hookrightarrow X_\sigma$, $x\mapsto(\OO_L,x)$. By \proref{pnoncomind} it gives us an isomorphism
$$
i_\sigma^*\colon \chf_{i_\sigma(X_K)}(C_0(X_\sigma)\rtimes J_L)\chf_{i_\sigma(X_K)}\to C_0(X_K)\rtimes J_K.
$$
Thus we can define a $(C_0(X_L)\rtimes J_L)$-$(C_0(X_K)\rtimes J_K)$-correspondence, that is, a right Hilbert $(C_0(X_K)\rtimes X_K)$-module with a left action of $C_0(X_L)\rtimes J_L$, by
$$
\tilde A_\sigma=(C_0(X_\sigma)\rtimes J_L)\chf_{i_\sigma(X_K)},\ \ \langle \xi,\zeta\rangle=i^*_\sigma(\xi^*\zeta).
$$
The actions of $C_0(X_L)\rtimes J_L$ and $C_0(X_K)\rtimes X_K$ are given by $\pi^*_\sigma$ and $(i^*_\sigma)^{-1}$. Since $J_Li_\sigma(X_K)=X_\sigma$, the projection $\chf_{i_\sigma(X_K)}\in M(C_0(X_\sigma)\rtimes J_L)$ is full. As $\pi^*_\sigma$ is injective, it follows that the left action of $C_0(X_L)\rtimes J_L$ is faithful.

It will be convenient to have the following description of the Hilbert module $\tilde A_\sigma$. Consider $C^*(J_L)$ as a right Hilbert $C^*(J_K)$-module $C^*(J_L)_\sigma$ with the right module structure defined by the embedding $C^*(J_K)\hookrightarrow C^*(J_L)$ defined by $\sigma$, and the $C^*(J_K)$-valued inner product $\langle\xi,\zeta\rangle=\sigma^{-1}(E(\xi^*\zeta))$, where $E\colon C^*(J_L)\to C^*(\sigma(J_K))$ is the canonical conditional expectation, so $E(u_g)=0$ for $g\in J_L\setminus \sigma(J_K)$.

\begin{lemma} \label{liso}
We have a canonical isomorphism $\tilde A_\sigma\cong C^*(J_L)_\sigma\otimes_{C^*(J_K)}(C_0(X_K)\rtimes J_K)$
of right Hilbert $(C_0(X_K)\rtimes J_K)$-modules. Under this isomorphism the left action of $C_0(X_L)\rtimes J_L$ is given by
$$
u_gf(u_h\otimes\xi)=u_{gh}\otimes f(h\sigma(\cdot))\xi\ \ \hbox{for}\ \ g,h\in J_L,\ \ f\in C_0(X_L)\ \ \hbox{and}\ \ \xi\in C_0(X_K)\rtimes J_K.
$$
\end{lemma}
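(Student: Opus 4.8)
The goal is to identify the concretely-defined Hilbert module $\tilde A_\sigma=(C_0(X_\sigma)\rtimes J_L)\chf_{i_\sigma(X_K)}$ with the tensor-product module $C^*(J_L)_\sigma\otimes_{C^*(J_K)}(C_0(X_K)\rtimes J_K)$, together with matching left actions. The plan is to build an explicit unitary map between the two modules, using the fact that $X_\sigma=J_L\times_{J_K}X_K$ is an induced space, so that its points have representatives $(g,x)$ with $g\in J_L$ and $x\in X_K$, and the balancing relation is $(g\sigma(h)^{-1},hx)\sim(g,x)$ for $h\in J_K$. Because $J_L/\sigma(J_K)$ indexes the $J_K$-orbits, a set of coset representatives of $\sigma(J_K)$ in $J_L$ gives a clopen partition of $X_\sigma$, and on each piece the copy of $X_K$ sits as a translate of $i_\sigma(X_K)$.

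\textbf{Key steps.}
First I would write a general element of $\tilde A_\sigma$ as a finite sum $\sum_g u_g \xi_g$ with $\xi_g\in\chf_{i_\sigma(X_K)}(C_0(X_\sigma)\rtimes J_L)\chf_{i_\sigma(X_K)}$, exploiting that multiplication on the right by the full projection $\chf_{i_\sigma(X_K)}$ lands us in the corner, which $i_\sigma^*$ identifies with $C_0(X_K)\rtimes J_K$. This suggests the candidate map $u_h\otimes\xi\mapsto u_h\,(i_\sigma^*)^{-1}(\xi)$ on elementary tensors $u_h\otimes\xi$ with $h\in J_L$ and $\xi\in C_0(X_K)\rtimes J_K$. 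Second, I would verify that this map is well defined and respects the inner products: for the tensor module the inner product of $u_h\otimes\xi$ and $u_k\otimes\zeta$ is $\langle\xi,\sigma^{-1}(E(u_h^*u_k))\zeta\rangle$, and the conditional expectation $E$ kills $u_{h^{-1}k}$ unless $h^{-1}k\in\sigma(J_K)$; on the other side the inner product $i^*_\sigma(\eta^*\eta')$ vanishes for the same reason, because $u_h^*\chf_{i_\sigma(X_K)}u_k$ is supported on $i_\sigma(X_K)$ only when the two translates $h\,i_\sigma(X_K)$ and $k\,i_\sigma(X_K)$ coincide, i.e. when $h^{-1}k\in\sigma(J_K)$ (here I use $g\,i_\sigma(X_K)\cap i_\sigma(X_K)\ne\emptyset\Leftrightarrow g\in\sigma(J_K)$ exactly as in the proof of Proposition~\ref{pnoncomind}). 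Matching these two computations forces the isomorphism of modules. Third, I would transport the left action: $\pi^*_\sigma$ represents $C_0(X_L)\rtimes J_L$ by pullback along $\pi_\sigma(g,x)=g\sigma(x)$, so $u_g f$ acts on the representative $(h,x)$ by shifting the first coordinate to $gh$ and evaluating $f$ at the new $X_L$-point, which after unwinding the balancing relation produces exactly the stated formula $u_gf(u_h\otimes\xi)=u_{gh}\otimes f(h\sigma(\cdot))\xi$, where $f(h\sigma(\cdot))$ is the element of $C_0(X_K)$ obtained by restricting the $J_L$-translate of $f$ to the slice $i_\sigma(X_K)\cong X_K$.

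\textbf{Main obstacle.}
The routine parts are the verification that the map is isometric on inner products and extends to the module completion. The delicate point, which I expect to require the most care, is the left-action formula: one must check that pulling $f\in C_0(X_L)$ back through $\pi_\sigma$, translating by $u_g$, and then cutting down by $\chf_{i_\sigma(X_K)}$ really yields the function $h\mapsto f(h\sigma(\cdot))$ on $X_K$ as an element of the multiplier algebra $C_b(X_K)$ acting on $C_0(X_K)\rtimes J_K$, and that this is compatible with the balancing so the expression $u_{gh}\otimes f(h\sigma(\cdot))\xi$ does not depend on the choice of representative $h$ of its $\sigma(J_K)$-coset. Keeping track of where the transfer map $V_{L/\sigma(K)}$ enters (it is hidden in the definition of $\sigma\colon X_K\to X_L$ via property~\eqref{efunct}) is what makes the bookkeeping subtle, though it does not affect the final algebraic form of the action.
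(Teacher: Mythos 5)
Your proposal is correct and is essentially the paper's argument: the paper also observes that $\tilde A_\sigma$ is the closed span of elements $u_hf$ with $\supp f\subset i_\sigma(X_K)$ and declares the map $u_hf\mapsto u_h\otimes f(i_\sigma(\cdot))$ (the inverse of your candidate map) to be the required isomorphism, leaving the inner-product and left-action checks as "straightforward." Your verification of the inner products via the condition $g\,i_\sigma(X_K)\cap i_\sigma(X_K)\ne\emptyset\Leftrightarrow g\in\sigma(J_K)$ and of the left action via $\pi_\sigma^*$ is exactly the intended content of that remark.
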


\bp The module $\tilde A_\sigma$ is the closed linear span of elements of the form $u_hf\in C_0(X_\sigma)\rtimes J_L$ with $\supp f\subset i_\sigma(X_K)$. It is then straightforward to check that the map $u_hf\mapsto u_h\otimes f(i_\sigma(\cdot))$ is the required isomorphism.
\ep

Recalling now that the C$^*$-algebra of the Bost-Connes system for $K$ is $A_K=C(Y_K)\rtimes J_K^+=\chf_{Y_K}(C_0(X_K)\rtimes J_K)\chf_{Y_K}$, where $Y_K=\gal(\kab/K)\times_{\ohs_K}\hat\OO_K$, we can define an $A_L$-$A_K$-correspondence~by
$$
A_\sigma=\chf_{Y_L}\tilde A_\sigma\chf_{Y_K}.
$$
Observe that since $\chf_{Y_K}$ is a full projection in $C_0(X_K)\rtimes J_K$, the left action of $C_0(X_L)\rtimes Y_L$ on $\tilde A_\sigma\chf_{Y_K}$ is still faithful. Hence the left action of $A_L$ on $A_\sigma$ is faithful.

\begin{lemma}
Assume $\sigma\colon K\to L$ and $\tau\colon L\to E$ are embeddings of number fields. Then we have a canonical isomorphism
$
A_\tau\otimes_{A_L}A_\sigma\cong A_{\tau\circ\sigma}
$
of $A_E$-$A_K$-correspondences.
\end{lemma}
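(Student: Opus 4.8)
The plan is to prove the isomorphism first for the ambient correspondences $\tilde A_\sigma=(C_0(X_\sigma)\rtimes J_L)\chf_{i_\sigma(X_K)}$ and then to cut down to the full corners defining $A_\sigma$, $A_\tau$, $A_{\tau\circ\sigma}$. At the ambient level I would use the model $\tilde A_\sigma\cong C^*(J_L)_\sigma\otimes_{C^*(J_K)}(C_0(X_K)\rtimes J_K)$ from \lemref{liso}, together with the analogous models for $\tilde A_\tau$ and $\tilde A_{\tau\circ\sigma}$. Writing $\tilde A_\tau\cong C^*(J_E)_\tau\otimes_{C^*(J_L)}(C_0(X_L)\rtimes J_L)$, using associativity of the internal tensor product and $(C_0(X_L)\rtimes J_L)\otimes_{C_0(X_L)\rtimes J_L}\tilde A_\sigma\cong\tilde A_\sigma$, the composite $\tilde A_\tau\otimes_{C_0(X_L)\rtimes J_L}\tilde A_\sigma$ reduces to
$$
(C^*(J_E)_\tau\otimes_{C^*(J_L)}C^*(J_L)_\sigma)\otimes_{C^*(J_K)}(C_0(X_K)\rtimes J_K).
$$
Thus the whole statement hinges on the group-algebraic identity $C^*(J_E)_\tau\otimes_{C^*(J_L)}C^*(J_L)_\sigma\cong C^*(J_E)_{\tau\circ\sigma}$ of right Hilbert $C^*(J_K)$-modules.

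For this identity I would consider the map $u_g\otimes\eta\mapsto u_g\tau(\eta)$ for $g\in J_E$ and $\eta\in C^*(J_L)$, and verify that it preserves the $C^*(J_K)$-valued inner products. The essential input is that the canonical conditional expectations compose along the tower of subgroups $(\tau\circ\sigma)(J_K)\subset\tau(J_L)\subset J_E$: the expectation onto $C^*((\tau\circ\sigma)(J_K))$ is the expectation onto $C^*(\tau(J_L))$ followed by the expectation onto $C^*((\tau\circ\sigma)(J_K))$, since each simply annihilates the group elements outside the relevant subgroup. Expanding both inner products and using that the expectation $E\colon C^*(J_E)\to C^*(\tau(J_L))$ is a bimodule map over $C^*(\tau(J_L))$ makes the two sides agree; the range is dense because $u_g\otimes1\mapsto u_g$. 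Tensoring on the right with $C_0(X_K)\rtimes J_K$ then yields the ambient isomorphism, and I would check it is left $(C_0(X_E)\rtimes J_E)$-linear by tracking the action formula of \lemref{liso}: on $u_h\otimes(u_k\otimes\eta)$ the pullbacks combine as $f(h\tau(k\sigma(\cdot)))=f(h\tau(k)(\tau\circ\sigma)(\cdot))$, matching exactly the formula for $\tilde A_{\tau\circ\sigma}$ at the image $u_{h\tau(k)}\otimes\eta$.

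Next I would descend to the corners. The projections $\chf_{Y_K}$, $\chf_{Y_L}$, $\chf_{Y_E}$ are full in the respective crossed products, since every point of $X_\bullet$ can be scaled by an ideal into $Y_\bullet$, i.e. $J_\bullet\cdot Y_\bullet=X_\bullet$. For a full projection $p$ in a C$^*$-algebra $B$ one has $Bp\otimes_{pBp}pB\cong B$, so cutting by full projections is compatible with composition of correspondences; concretely, for the bimodule correspondences $M=\tilde A_\tau$ and $N=\tilde A_\sigma$ over $C_0(X_L)\rtimes J_L$,
$$
(\chf_{Y_E}M\chf_{Y_L})\otimes_{A_L}(\chf_{Y_L}N\chf_{Y_K})\cong\chf_{Y_E}(M\otimes_{C_0(X_L)\rtimes J_L}N)\chf_{Y_K}.
$$
Combining this with the ambient isomorphism of the previous steps gives $A_\tau\otimes_{A_L}A_\sigma\cong A_{\tau\circ\sigma}$ as $A_E$-$A_K$-correspondences.

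I expect the main obstacle to be the left-action bookkeeping in the second step: the isometry at the level of group C$^*$-algebras is visibly a right-module and left $C^*(J_E)$-module map, but confirming that it remains $(C_0(X_E)\rtimes J_E)$-linear after tensoring with $C_0(X_K)\rtimes J_K$ requires unwinding all the identifications of \lemref{liso} simultaneously and checking that the two iterated pullback-and-act formulas coincide, rather than merely matching the underlying Hilbert-module structures.
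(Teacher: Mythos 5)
Your proposal is correct and follows essentially the same route as the paper: the same reduction of $\tilde A_\tau\otimes_{C_0(X_L)\rtimes J_L}\tilde A_\sigma$ via the models of \lemref{liso} to the group-algebra identity $C^*(J_E)_\tau\otimes_{C^*(J_L)}C^*(J_L)_\sigma\cong C^*(J_E)_{\tau\circ\sigma}$, followed by the same full-projection lemma ($Xp\otimes_{pAp}pY\cong X\otimes_A Y$) to descend to the corners. The only difference is that you spell out the conditional-expectation and left-action verifications that the paper leaves implicit.
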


\bp Using Lemma~\ref{liso} we get the following isomorphisms of right Hilbert $(C_0(X_K)\rtimes J_K)$-modules:
\begin{align*}
\tilde A_\tau\otimes_{C_0(X_L)\rtimes J_L}\tilde A_\sigma
&\cong \big(C^*(J_E)_\tau\otimes_{C^*(J_L)}(C_0(X_L)\rtimes J_L)\big)\otimes_{C_0(X_L)\rtimes J_L}\tilde A_\sigma\\
&\cong C^*(J_E)_\tau\otimes_{C^*(J_L)} \tilde A_\sigma\\
&\cong C^*(J_E)_\tau\otimes_{C^*(J_L)}\big(C^*(J_L)_\sigma\otimes_{C^*(J_K)}(C_0(X_K)\rtimes J_K)\big)\\
&\cong C^*(J_E)_{\tau\circ\sigma}\otimes_{C^*(J_K)}(C_0(X_K)\rtimes J_K)\\
&\cong \tilde A_{\tau\circ\sigma}.
\end{align*}
It is easy to see that these isomorphisms respect the left actions of $C_0(X_E)\rtimes J_E$. The lemma is now a consequence of the following general result. If $A$ and $B$ are C$^*$-algebras, $X$ is a right Hilbert $A$-module, $Y$ is an $A$-$B$-correspondence and $p\in A$ is a full projection then the map
$$
Xp\otimes_{pAp}pY\to X\otimes_AY,\ \ \xi\otimes\zeta\mapsto\xi\otimes\zeta,
$$
is an isomorphism of right Hilbert $B$-modules. Indeed, we have
$$
Xp\otimes_{pAp}pY\cong X\otimes_AAp\otimes_{pAp}pA\otimes_AY,
$$
so the result follows from the isomorphism $Ap\otimes_{pAp}pA\cong A$, $a\otimes b\mapsto ab$, of $A$-$A$-correspondences.
\ep

The correspondences we have constructed are not quite compatible with the dynamics of Bost-Connes systems, because $N_L\circ\sigma=N_K^{[L:\sigma(K)]}$. It is therefore natural to replace the absolute norm~$N_K$ by the normalized norm $\tilde N_K:=N_K^{1/[K:\Q]}$, and define a dynamics $\tilde\sigma^K$ on $A_K\subset C_0(X_K)\rtimes J_K$ by
$$
\tilde\sigma^K_t(fu_g)=\tilde N_K(g)^{it}fu_g=N_K(g)^{it/[K:\Q]}fu_g=\sigma^K_{t/[K: \Q]}(fu_g).
$$

For an embedding $\sigma\colon K\to L$ of number fields we define a one-parameter group of isometries $U^\sigma$ on $A_\sigma\subset C_0(X_\sigma)\rtimes J_L$ by
$$
U^\sigma_tfu_g=\tilde N_L(g)^{it}fu_g=N_L(g)^{it/[L:\Q]}fu_g.
$$
The correspondence $A_\sigma$ then becomes equivariant for the dynamical systems $(A_L,\tilde\sigma^L)$ and $(A_K,\tilde\sigma^K)$ in the sense that
$$
U^\sigma_ta\xi=\tilde\sigma^L_t(a)U^\sigma_t\xi \ \hbox{for} \ a\in A_L,\ \ U^\sigma_t(\xi a)=(U^\sigma_t\xi)\tilde\sigma^K_t(a) \ \hbox{for}  \ a\in A_K,\ \ \langle U^\sigma_t\xi,U^\sigma_t\zeta\rangle=\tilde\sigma^K_t(\langle\xi,\zeta\rangle).
$$

It is clear that the isomorphism $A_\tau\otimes_{A_L}A_\sigma\cong A_{\tau\circ\sigma}$ is equivariant with respect to the actions of~$\R$ by isometries $U^\tau_t\otimes U^\sigma_t$ on $A_\tau\otimes_{A_L}A_\sigma$ and $U^{\tau\circ\sigma}_t$ on $A_{\tau\circ\sigma}$.

\smallskip

Summarizing properties of the correspondences $A_\sigma$ we get the following result.

\begin{theorem}\label{tfunctor}
The maps $K\mapsto(A_K,\tilde\sigma^K)$ for number fields $K$ and $\sigma\mapsto(A_\sigma,U_\sigma)$ for embeddings $\sigma \colon K\to L$ of number fields, define a functor from the category of number fields with embeddings as morphisms into the category of C$^*$-dynamical systems with isomorphism classes of $\R$-equivariant correspondences as morphisms.
\end{theorem}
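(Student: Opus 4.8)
The plan is to verify the three defining properties of a functor, most of which have in fact already been assembled in the preceding lemmas, so that the proof becomes largely a matter of bookkeeping. First I would make explicit that the target is a genuine category: its objects are C$^*$-dynamical systems $(A,\alpha)$, a morphism from $(A,\alpha)$ to $(B,\beta)$ is an isomorphism class of $\R$-equivariant correspondences which are left $B$-modules and right $A$-modules, and composition is the interior tensor product over the middle algebra. Because we pass to isomorphism classes, the interior tensor product descends to classes, associativity of composition follows at once from the associativity of the interior tensor product, and the unit constraints $A\otimes_A X\cong X$, $X\otimes_B B\cong X$ trivialize the coherence that would otherwise have to be checked. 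It has already been shown just before the statement that each $A_\sigma$ is an $\R$-equivariant $A_L$-$A_K$-correspondence with faithful left action (via \proref{pnoncomind} together with the fullness of $\chf_{i_\sigma(X_K)}$ and the fullness of $\chf_{Y_K}$), so both the assignment on objects and the assignment on morphisms land in this category.

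Next I would verify preservation of composition. This is exactly the content of the composition lemma above, which produces for embeddings $\sigma\colon K\to L$ and $\tau\colon L\to E$ a canonical isomorphism $A_\tau\otimes_{A_L}A_\sigma\cong A_{\tau\circ\sigma}$ of $A_E$-$A_K$-correspondences, together with the observation recorded immediately after it that this isomorphism intertwines the one-parameter groups $U^\tau_t\otimes U^\sigma_t$ and $U^{\tau\circ\sigma}_t$. Since morphisms in the target are taken up to equivariant isomorphism, this gives the strict identity $F(\tau)\circ F(\sigma)=F(\tau\circ\sigma)$ of morphisms.

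It then remains to check preservation of identities, which I expect to be routine. For the identity embedding $\id_K\colon K\to K$ one has $X_{\id_K}=J_K\times_{J_K}X_K\cong X_K$, under which $i_{\id_K}(X_K)$ is all of $X_K$, so $\chf_{i_{\id_K}(X_K)}=1$ and $\tilde A_{\id_K}=C_0(X_K)\rtimes J_K$ is the standard bimodule over itself; equivalently this is read off from \lemref{liso} via $C^*(J_K)_{\id}\otimes_{C^*(J_K)}(C_0(X_K)\rtimes J_K)\cong C_0(X_K)\rtimes J_K$, which carries the left action to ordinary multiplication. Cutting by $\chf_{Y_K}$ on both sides yields $A_{\id_K}\cong A_K$ as the identity $A_K$-$A_K$-correspondence, while $U^{\id_K}_t(fu_g)=\tilde N_K(g)^{it}fu_g=\tilde\sigma^K_t(fu_g)$, so $U^{\id_K}=\tilde\sigma^K$ and hence $F(\id_K)=\id_{F(K)}$.

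In short, the theorem is essentially a summary statement. The only genuinely nontrivial input is the composition isomorphism $A_\tau\otimes_{A_L}A_\sigma\cong A_{\tau\circ\sigma}$, which itself rests on \lemref{liso} and the description of $\tilde A_\sigma$ as a balanced tensor product over $C^*(J_K)$; I regard establishing that isomorphism as the main obstacle, and it has already been overcome. What remains to write here is the verification of identities above and the remark that working with isomorphism classes of equivariant correspondences makes associativity and the unit laws automatic.
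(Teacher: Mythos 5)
Your proposal is correct and matches the paper's (largely implicit) argument: the theorem is stated as a summary of the preceding construction, the composition lemma $A_\tau\otimes_{A_L}A_\sigma\cong A_{\tau\circ\sigma}$ together with the recorded equivariance of that isomorphism, exactly as you use them. Your explicit verification that $A_{\id_K}$ is the identity correspondence with $U^{\id_K}=\tilde\sigma^K$ is a routine detail the paper omits, and it checks out.
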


It is natural to ask whether this functor is injective on objects and morphisms. A related problem has been recently studied in~\cite{CM}, where it is shown that the systems $(A_K,\sigma^K)$ and $(A_L,\sigma^L)$ are isomorphic (via an isomorphism of a particular form) if and only if $K$ and $L$ are isomorphic.

\smallskip

Next we will check how KMS-states for Bost-Connes systems behave under induction with respect to correspondences $A_\sigma$. For this we shall use the general construction of induced KMS-weights~\cite{LN}.

Assume $A$ is a C$^*$-algebra with a one-parameter group of automorphisms $\sigma$, $X$ is a right Hilbert $A$-module, and $U$ is a one-parameter group of isometries on $X$ such that  $U_t(\xi a)=(U_t\xi)\sigma_t(a)$ and $\langle U_t\xi,U_t\zeta\rangle=\sigma_t(\langle\xi,\zeta\rangle)$ (the first condition is in fact a consequence of the second). Then $U$ defines a strictly continuous $1$-parameter group of automorphisms~$\sigma^U$ on the C$^*$-algebra $B(X)$ of adjointable operators on $X$, $\sigma^U_t(T)=U_tTU_{-t}$. Assume $\varphi$ is a $\sigma$-KMS$_\beta$ weight on $A$, so $\varphi$ is $\sigma$-invariant, lower semicontinuous, densely defined and
$\varphi(x^*x)=\varphi(\sigma_{-i\beta/2}(x)\sigma_{-i\beta/2}(x)^*)$ for every~$x$ in the domain of definition of $\sigma_{-i\beta/2}$. By \cite[Theorem~3.2]{LN} there exists a unique $\sigma^U$-KMS$_\beta$ weight $\Phi$ on the C$^*$-algebra~$K(X)$ of generalized compact operators on $X$ such that
$$
\Phi(\theta_{\xi,\xi})=\varphi(\langle U_{i\beta/2}\xi,U_{i\beta/2}\xi\rangle)
$$
for every $\xi\in X$ in the domain of definition of $U_{i\beta/2}$, where $\theta_{\xi,\xi}\in K(X)$ is the operator defined by $\theta_{\xi,\xi}\zeta=\xi\langle\xi,\zeta\rangle$. Furthermore, the weight $\Phi$ extends uniquely to a strictly lower semicontinuous weight on $B(X)$. We will denote this weight by $\Ind^U_X\varphi$.

Induced weights behave in the expected way with respect to induction in stages. Namely, assume~$B$ is another C$^*$-algebra with dynamics~$\gamma$ and $Y$ is a right Hilbert $B$-module with a one-parameter group of isometries $V$ such that $\langle V_t\xi,V_t\zeta\rangle=\gamma_t(\langle\xi,\zeta\rangle)$. Assume further that $B$ acts on the left on $X$ and $U_tb\xi=\gamma_t(b)U_t\xi$. By \cite[Proposition~3.4]{LN} if the restriction of $\Ind^U_X\varphi$ to $B$ is densely defined then
$$
\Ind^V_Y((\Ind^U_X\varphi)|_B)=\Ind^{V\otimes U}_{Y\otimes_B X}\varphi\ \ \hbox{on}\ \ B(Y).
$$

Returning to Bost-Connes systems, recall that by \cite[Proposition 1.1]{LLNlat} for every $\beta\ne0$ there is a one-to-one correspondence between positive $\sigma^K$-KMS$_\beta$-functionals on $A_K$ and measures $\mu$ on $X_K$ such that $\mu(Y_K)<\infty$ and $\mu(gZ)=N_K(g)^{-\beta}\mu(Z)$ for $g\in J_K$ and Borel subsets $Z\subset X_K$. Such a measure defines a weight on $C_0(X_K)$. By composing it with the canonical conditional expectation $C_0(X_K)\rtimes J_K\to C_0(X_K)$, we get a weight on the crossed product, and its restriction to $A_K$ gives the required functional corresponding to $\mu$. It follows from \cite[Proposition 1.2]{LLNlat} that for $\beta>1$ such a measure $\mu$ is completely determined by its restriction to $Y_{K,0}=\gal(\kab/K)\times_{\ohs_K}\hat\OO_K^*$, and any finite measure~$\nu$ on~$Y_{K,0}$ extends uniquely to a measure $\mu$ on $X_K$ satisfying the above conditions. We denote the corresponding functional on $A_K$ by $\varphi_{\beta,\nu}$. Then $\varphi_{\beta,\nu}(1)=\zeta_K(\beta)\nu(Y_{K,0})$, where $\zeta_K$ is the Dedekind zeta function. One the other hand, for every $\beta\in(0,1]$ there is a unique KMS$_\beta$-state, and for the corresponding measure~$\mu$ we have $\mu(Y_{K,0})=0$, see \cite[Theorem~2.1]{LLNlat}.

\begin{proposition} \label{pinduction}
Let $L/K$ be an extension of number fields with $K\ne L$, $\varphi$ a $\sigma^K$-KMS$_{[L: K]\beta}$-state (hence a $\tilde\sigma^K$-KMS$_{[L:\Q]\beta}$-state) on $A_K$. Put $\Phi=(\Ind^{U_\sigma}_{A_\sigma}\varphi)|_{A_L}$, where $\sigma\colon K\to L$ is the identity map, so $\Phi$ is a weight satisfying the $\sigma^L$-KMS$_\beta$-condition but possibly not densely defined. Then
\enu{i} if $\beta>1$ and $\varphi=\varphi_{[L: K]\beta,\nu}$ for a measure $\nu$ on $Y_{K,0}$ then $\Phi=\varphi_{\beta,\sigma_*(\nu)}$; in particular,
$$
\Phi(1)=\frac{\zeta_L(\beta)}{\zeta_K([L:K]\beta)};
$$
\enuu{ii} if $\beta\in(0,1]$ then $\Phi(1)=+\infty$.
\end{proposition}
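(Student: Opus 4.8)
The plan is to move everything into the measure-theoretic description of KMS-functionals from \cite[Propositions~1.1 and~1.2]{LLNlat} and there read off the measure attached to $\Phi$. First I would settle the temperature bookkeeping. Since $\tilde\sigma^K_t=\sigma^K_{t/[K:\Q]}$, a $\sigma^K$-KMS$_{[L:K]\beta}$-state is a $\tilde\sigma^K$-KMS$_{[L:\Q]\beta}$-state; because $A_\sigma$ is $\R$-equivariant with the left $A_L$-action intertwining $U^\sigma$ and $\tilde\sigma^L$, the general construction of \cite{LN} produces a $\tilde\sigma^L$-KMS$_{[L:\Q]\beta}$-weight, which is a $\sigma^L$-KMS$_\beta$-weight, as asserted. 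Thus $\varphi$ corresponds to a measure $\mu_K$ on $X_K$ with $\mu_K(hZ)=N_K(h)^{-[L:K]\beta}\mu_K(Z)$ for $h\in J_K$, and the functional $\Phi$ corresponds to a (possibly infinite) measure $\mu_L$ on $X_L$ with $\mu_L(gZ)=N_L(g)^{-\beta}\mu_L(Z)$ for $g\in J_L$; the problem becomes the computation of $\mu_L$ and of $\mu_L(Y_L)=\Phi(1)$.

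The technical heart is to identify $\mu_L$ from the formula $\Phi(\theta_{\xi,\xi})=\varphi(\langle U^\sigma_{i[L:\Q]\beta/2}\xi,U^\sigma_{i[L:\Q]\beta/2}\xi\rangle)$. Here I would use the description $\tilde A_\sigma\cong C^*(J_L)_\sigma\otimes_{C^*(J_K)}(C_0(X_K)\rtimes J_K)$ of \lemref{liso} together with the frame $\{u_g\}$ indexed by coset representatives $g$ of $\sigma(J_K)$ in $J_L$, which decomposes $X_\sigma=J_L\times_{J_K}X_K$ into the clopen translates $g\cdot i_\sigma(X_K)$ of $i_\sigma(X_K)\cong X_K$. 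On the trivial translate the inner product reduces to $\varphi$, so the induced measure $\mu_\sigma$ on $X_\sigma$ restricts to $\mu_K$ on $i_\sigma(X_K)$; the analytic generator $U^\sigma_{i[L:\Q]\beta/2}$ contributes exactly the factor $\tilde N_L(g)^{-[L:\Q]\beta}=N_L(g)^{-\beta}$ on the $g$-translate, so that $\mu_\sigma(g\cdot Z)=N_L(g)^{-\beta}\mu_K(Z)$ and $\mu_L=(\pi_\sigma)_*\mu_\sigma$. Concretely this gives, for $f\in C_c(X_L)$,
\[
\int_{X_L}f\,d\mu_L=\sum_{\bar g\in J_L/\sigma(J_K)}N_L(g)^{-\beta}\int_{X_K}f\big(g\,\sigma(x)\big)\,d\mu_K(x).
\]
Carrying out this identification rigorously --- justifying the analytic continuation, the passage to the corner $A_\sigma=\chf_{Y_L}\tilde A_\sigma\chf_{Y_K}$, and the matching with \cite[Propositions~1.1 and~1.2]{LLNlat} --- is the step I expect to be the main obstacle.

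For part~(i), with $\beta>1$, I would first check that $\sigma$ carries $Y_{K,0}=\gal(\kab/K)\times_{\ohs_K}\hat\OO^*_K$ into $Y_{L,0}$, using $\sigma(\hat\OO^*_K)\subset\hat\OO^*_L$ on the idele coordinate and the transfer map on the Galois coordinate. Since $\mu_K$ is carried by the $J_K$-orbits meeting $Y_{K,0}$, a point $g\,\sigma(hy)=g\,\sigma(h)\sigma(y)$ with $y\in Y_{K,0}$ lands in $Y_{L,0}$ only when $g\,\sigma(h)=\OO_L$, that is, only for the trivial coset $\bar g$. Hence $\mu_L$ restricted to $Y_{L,0}$ equals $\sigma_*(\mu_K|_{Y_{K,0}})=\sigma_*\nu$, and \cite[Proposition~1.2]{LLNlat} forces $\Phi=\varphi_{\beta,\sigma_*\nu}$. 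The value of $\Phi(1)$ then follows from $\Phi(1)=\zeta_L(\beta)\,\sigma_*\nu(Y_{L,0})=\zeta_L(\beta)\,\nu(Y_{K,0})$ together with the normalization $\varphi(1)=\zeta_K([L:K]\beta)\,\nu(Y_{K,0})=1$.

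For part~(ii), with $\beta\in(0,1]$ we have $\mu_K(Y_{K,0})=0$, so I would argue divergence directly from $\Phi(1)=\mu_L(Y_L)=\mu_\sigma(\pi_\sigma^{-1}(Y_L))$. For any integral ideal $\mathfrak b\in J_L$ the translate $\mathfrak b\cdot i_\sigma(Y_K)$ is contained in $\pi_\sigma^{-1}(Y_L)$ and has $\mu_\sigma$-measure $N_L(\mathfrak b)^{-\beta}\mu_K(Y_K)$, and these translates are pairwise disjoint once the $\mathfrak b$ lie in distinct cosets of $\sigma(J_K)$. Choosing $S$ to be the degree-one primes of $L$ --- which are never inert (the inert primes lie in $\sigma(J_K)$) and which, up to finitely many exceptions, inject into $J_L/\sigma(J_K)$ --- I obtain
\[
\Phi(1)\ \ge\ \mu_K(Y_K)\sum_{\mathfrak b\in S}N_L(\mathfrak b)^{-\beta}.
\]
Since the degree-one primes account for almost all prime ideals of $L$, the sum $\sum_{\mathfrak b\in S}N_L(\mathfrak b)^{-\beta}$ diverges for every $\beta\le1$, and as $\mu_K(Y_K)=\varphi(1)>0$ this yields $\Phi(1)=+\infty$.
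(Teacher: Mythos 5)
Your proposal is correct, and part (i) follows essentially the paper's own route: identify the induced weight with the measure $\pi_{\sigma*}\lambda$, where $\lambda$ is the unique extension to $X_\sigma$ of $\mu_K$ on $i_\sigma(X_K)$ scaling by $N_L(g)^{-\beta}$, observe that $g\,\sigma(Y_{K,0})$ meets $Y_{L,0}$ only for the trivial coset, and conclude $\mu_L|_{Y_{L,0}}=\sigma_*\nu$. The step you flag as the main obstacle is dispatched in the paper by the remark that induction along the correspondence $A\chf$ attached to a full projection $\chf$ is simply extension of the KMS-weight from the corner, which in the measure picture is exactly the extension $\mu_K\mapsto\lambda$ followed by restriction along $\pi_\sigma^*$, i.e.\ pushforward; your frame computation lands in the same place. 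For part (ii), however, your argument is genuinely different from, and simpler than, the paper's. The paper splits into the easy range $\beta>1/[L:K]$ (where $\mu_K(Y_{K,0})>0$ forces divergence) and the hard range $\beta\le 1/[L:K]$, where it translates the sets $W_v\subset Y_K$ by the semigroup generated by the primes above $v$ and bounds $\mu_\sigma(Y_L)$ below by a product of local factors $\bigl(1-N_K(\pp_v)^{-[L:K]\beta}\bigr)/\prod_{w|v}\bigl(1-N_L(\pp_w)^{-\beta}\bigr)$, whose divergence requires a separate monotonicity lemma. You instead translate all of $Y_K$ by the degree-one primes of $L$: your verification that distinct non-inert primes of $L$ (in particular distinct degree-one primes, since $[L:K]\ge 2$) lie in distinct cosets of $\sigma(J_K)$ is sound, the translates $\qq\, i_\sigma(Y_K)$ are then pairwise disjoint subsets of $\pi_\sigma^{-1}(Y_L)$ of $\lambda$-measure $N_L(\qq)^{-\beta}$, and $\sum_{\deg\qq=1}N_L(\qq)^{-\beta}$ diverges for all $\beta\le 1$ because the higher-degree primes contribute a convergent sum to $\sum_\qq N_L(\qq)^{-1}$. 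This handles all of $(0,1]$ uniformly and replaces the Euler-product estimate by a standard fact about degree-one primes; what the paper's computation buys in exchange is an explicit quantitative lower bound of independent interest. One minor slip: your opening claim in (ii) that $\mu_K(Y_{K,0})=0$ for all $\beta\in(0,1]$ is wrong for $1/[L:K]<\beta\le 1$, since $\mu_K$ is the measure of a KMS$_{[L:K]\beta}$-state and $[L:K]\beta>1$ there; but your direct argument never uses this, so nothing is affected.
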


\bp Observe first that if $p$ is a full projection in a C$^*$-algebra $A$, then induction of KMS-weights
by the $A$-$pAp$ correspondence $Ap$ simply means extension. In view of this the induction procedure for Bost-Connes systems can be described as follows. Assume $\varphi$ is defined by a measure $\mu$ on $X_K$ as described above. It defines a measure on $i_\sigma(X_K)$. This measure extends uniquely to a measure $\lambda$ on $X_\sigma$ such that
$$
\lambda(gZ)=\tilde N_L(g)^{-[L: \Q]\beta}\lambda(Z)=N_L(g)^{-\beta}\lambda(Z) \ \ \hbox{for} \ g\in J_L \ \hbox{and Borel} \ Z\subset X_\sigma.
$$
Then $\Phi$ is the weight defined by the measure $\mu_\sigma:=\pi_{\sigma*}(\lambda)$ on $X_L$. Therefore the claims are that (i) if $\beta>1$ and $\nu=\mu|_{Y_{K,0}}$ then $\mu_\sigma|_{Y_{L,0}}=\sigma_*(\nu)$, and (ii) if $\beta\in(0,1]$ then $\mu_\sigma(Y_{L})=+\infty$.

\smallskip

Assume $\beta>1$ and let $\nu=\mu|_{Y_{K,0}}$. Since the sets $gY_{K,0}$, $g\in J_K$, are pairwise disjoint and the measure $\mu$ is determined by $\nu$, we have
$$
\mu(Z)=\sum_{g\in J_K}\tilde N_K(g)^{[L: \Q]\beta}\nu(gZ\cap Y_{K,0})\ \ \hbox{for Borel}\ Z\subset X_K.
$$
In particular, $\mu$ is concentrated on $J_KY_{K,0}$. Since the sets $gi_{\sigma}(Y_{K,0})$, $g\in J_L$, are pairwise disjoint, we have a similar formula for $\lambda$, so that $\lambda$ is concentrated on $J_Li_\sigma(Y_{K,0})$. Since $\pi_\sigma(i_\sigma(Y_{K,0}))\subset Y_{L,0}$, we conclude that $\mu_\sigma$ is concentrated on $J_L Y_{L,0}$ and
$\mu_\sigma|_{Y_{L,0}}=(\pi_\sigma\circ i_\sigma)_*(\nu)=\sigma_*(\nu)$.

\smallskip

Assume now that $\beta\in(0,1]$. For $\beta>1/[L: K]$ it is immediate that $\mu_\sigma(Y_L)=+\infty$, since on the one hand $\mu_\sigma(Y_{L,0})\ge\mu(Y_{K,0})>0$, and on the other we know that if $\mu_\sigma(Y_L)<\infty$ then $\mu_\sigma(Y_{L,0})=0$. But for $\beta\le1/[L:K]$ we need a different argument.

Let $v$ be a finite place of $K$. Consider the subset $W_v$ of $Y_K=\gal(\kab/K)\times_{\ohs_K}\hat\OO_K$ which is the image of $\gal(\kab/K)\times\OO^*_{K,v}\times\prod_{w\ne v, w\nmid\infty}\OO_{K,w}$ under the quotient map. The scaling condition for~$\mu$ implies (see \cite{LLNlat}) that
$$
\mu(W_v)=1-\tilde N_K(\pp_v)^{-[L:\Q]\beta}=1-N_K(\pp_v)^{-[L: K]\beta}.
$$
Denote by $J_{L,v}^+$ the unital subsemigroup of $J_L^+$ generated by ideals $\pp_w$ with $w|v$. Then  for $g\in J^+_{L,v}$ the sets $\pi_\sigma(gi_\sigma(W_v))=g\sigma(W_v)$ are mutually disjoint and contained in $Y_L$. Hence
$$
\mu_\sigma(Y_L)\ge \sum_{g\in J^+_{L,v}}\lambda(gi_\sigma(W_v))
=\sum_{g\in J^+_{L,v}}N_L(g)^{-\beta}\mu(W_v)=\frac{1-N_K(\pp_v)^{-[L: K]\beta}}{
\prod_{w|v}(1-N_L(\pp_w)^{-\beta})}.
$$
A similar computation for a finite set $F$ of places $v\nmid\infty$  yields
$$
\mu_\sigma(Y_L)\ge\prod_{v\in F}\frac{1-N_K(\pp_v)^{-[L: K]\beta}}{
\prod_{w|v}(1-N_L(\pp_w)^{-\beta})}.
$$
We claim that for $\beta\in(0,1]$ the above expression tends to infinity as $F$ ranges over all such sets. This is obviously the case for $\beta=1$, since the denominator converges to $\zeta_L(1)^{-1}=0$, while the numerator converges to $\zeta_K([L: K])^{-1}\ne0$ (as $[L: K]\ge2$ by assumption). Therefore it suffices to check that each factor in the above product is a non-increasing function in $\beta$ on $(0,1]$. To see this write $\pp_v\OO_L$ as $\prod_{w|v}\pp_w^{s_w}$, then $N_K(\pp_v)^{[L: K]}=\prod_{w|v}N_L(\pp_w)^{s_w}$. Therefore is suffices to check that for numbers $x_1,\dots,x_n>1$ and $s_1,\dots,s_n\ge1$ the function
$$
\frac{1-x_1^{-s_1\beta}\dots x_n^{-s_n\beta}}{(1-x_1^{-\beta})\dots(1-x_n^{-\beta})}
$$
is non-increasing in $\beta$ on $(0,1]$. This in turn is easy to see using that the function $\displaystyle\frac{1-ax^{-s\beta}}{1-x^{-\beta}}$ is non-increasing for any $x>1$, $s\ge1$ and $0\le a\le1$. Thus $\mu_\sigma(Y_L)=+\infty$.
\ep

\bigskip


\begin{thebibliography}{99}

\bibitem{ALR}
J. Arledge, M. Laca and I. Raeburn,
\textit{Semigroup crossed products and Hecke algebras arising from
number fields},
Doc. Math. \textbf{2} (1997) 115--138.



\bibitem{bos-con}
J.-B. Bost and A. Connes, {\em Hecke algebras, type III factors
and phase transitions with spontaneous symmetry breaking in number
theory}, Selecta Math. (N.S.) {\bf 1} (1995), 411--457.

\bibitem{cf}  J.W.S Cassels and A. Fr\"ohlich, eds., {\em Algebraic Number
    Theory}, Academic Press, 1967.

\bibitem{cmr1}
A. Connes, M. Marcolli and N. Ramachandran, {\em KMS states and complex multiplication}, Selecta Math. (N.S.) {\bf 11} (2005), 325--347.

\bibitem{cmr2}
A. Connes, M. Marcolli and N. Ramachandran, {\em KMS states and complex multiplication. II}, in: Operator Algebras: The Abel Symposium 2004, 15--59, Abel Symp., {\bf 1}, Springer, Berlin, 2006.


\bibitem{CM}
G. Cornelissen and M. Marcolli, {\em Quantum Statistical Mechanics, L-series and Anabelian Geometry}, preprint arXiv:1009.0736v5 [math.NT].

\bibitem{HP}
E. Ha and F. Paugam, {\em Bost-Connes-Marcolli systems for Shimura
varieties. I. Definitions and formal analytic properties}, IMRP Int.
Math. Res. Pap. {\bf 5} (2005), 237--286.

\bibitem{HL} D. Harari and E. Leichtnam, {\em Extension du
ph{\'e}nom{\`e}ne de
brisure spontan{\'e}e de sym{\'e}trie de Bost-Connes au cas de corps
globaux
quelconques}, Sel.\ Math.\ (New Series), \textbf{3} (1997), 205--243.

\bibitem{Ldir}
M. Laca, {\em Semigroups of $*$-endomorphisms, Dirichlet series, and phase transitions}, J. Funct. Anal. {\bf 152} (1998), 330--378.

\bibitem{Ldil}
M. Laca, {\em From endomorphisms to automorphisms and back: dilations and full corners}, J. London Math. Soc. (2) {\bf 61} (2000), 893--904.


\bibitem{LLNfcm} M. Laca, N.S. Larsen and S. Neshveyev, {\em Hecke
    algebras of semidirect products and the finite part of the
    Connes-Marcolli C$^*$-algebra}, Adv. Math. {\bf 217} (2008), 449--488.

\bibitem{LLNlat} M. Laca, N.~S. Larsen and S. Neshveyev,
    {\em On Bost-Connes type systems for number fields}, J. Number Theory {\bf
    129} (2009), 325--338.

\bibitem{LN}
M. Laca and S. Neshveyev, {\em KMS states of quasi-free dynamics
on Pimsner algebras}, J. Funct. Anal. {\bf 211} (2004),  457--482.

\bibitem{LRbcalg}  M. Laca and I. Raeburn,
{\em A semigroup crossed product
arising in number theory}, J. London Math. Soc. (2) \textbf{59} (1999),
330--344.

\bibitem{LFr}
M. Laca and M. van Frankenhuijsen, {\em
Phase transitions on Hecke C$^*$-algebras and class-field theory over $\Q$},
J. Reine angew. Math. {\bf 595} (2006), 25--53.

\bibitem{LFr2}
M. Laca and M. van Frankenhuijsen, {\em Phase transitions on Hecke C$^*$-algebras of
fields with nontrivial class number}, in preparation.

\bibitem{nes} S. Neshveyev, {\em Ergodicity of the action of the
    positive rationals on the group of finite adeles and the
    Bost--Connes phase transition theorem}, Proc. Amer. Math. Soc. {\bf
    130} (2002), 2999--3003.

\bibitem{nes2}
S. Neshveyev, {\em Von Neumann algebras arising from Bost-Connes type systems}, Int. Math. Res. Not. IMRN  {\bf 2011},  no. 1, 217--236.

\bibitem{tza}
K. Tzanev, {\em Hecke C$^*$-algebras and amenability}, J. Operator
Theory {\bf 50} (2003), 169--178.
\end{thebibliography}
\end{document}